\newcounter{warn}[page]
\newcommand{\danger}{${\color{red}\triangle}\llap{\raisebox{.3ex}%
{\tiny!\hspace{1.45ex}}}$}
\newcommand{\warning}[1]{%
\raisebox{.01em}[0em]{\danger\ifnum\value{warn} > 1%
\tiny\bf\arabic{warn}\fi}%
\marginpar{\color{red}\tiny{\ifnum\value{warn} > 1\tiny\bf\arabic{warn}:\fi}\tiny #1}%
\stepcounter{warn}}
\newtheorem{theorem}{Theorem}[section]
\newtheorem{definition}[theorem]{Definition}
\newtheorem{proposition}[theorem]{Proposition}
\newtheorem{lemma}[theorem]{Lemma}
\theoremstyle{remark}
\newtheorem{remark}{Remark}
\renewcommand{\emptyset}{\varnothing}
\newcommand{\R}{\mathbb{R}}
\renewcommand{\S}{\mathbb{S}}
\newcommand{\tto@args}[2] {{\displaystyle\mathop{\rightrightarrows}^{#2}_{#1}}}%
\newcommand{\tto}[1][{}{}]{\tto@args #1}%
\DeclareRobustCommand*{\rightfaktor}[3][]
{
   { \mathpalette{\rightfaktor@impl@}{{#1}{#2}{#3}} }
}
\newcommand*{\rightfaktor@impl@}[2]{\rightfaktor@impl#1#2}
\newcommand*{\rightfaktor@impl}[4]{
   \settoheight{\faktor@zaehlerhoehe}{\ensuremath{#1#2{#3}}}%
   \settoheight{\faktor@nennerhoehe}{\ensuremath{#1#2{#4}}}%
      \raisebox{-0.5\faktor@zaehlerhoehe}{\ensuremath{#1#2{#3}}}%
      \mkern-4mu\diagdown\mkern-5mu%
      \raisebox{0.5\faktor@nennerhoehe}{\ensuremath{#1#2{#4}}}%
}
\newcommand{\xxto}[1]{\xrightarrow{#1}}
\newcommand{\glueup}{\sharp^{\bullet}}
\newcommand{\gluelo}{\sharp_{\bullet}}
\newcommand{\eval}{\mathrm{ev}}%% evaluation map.
\newcommand{\Loops}{\mathop{\Omega}\nolimits}%% Topological loops
\newcommand{\FullLoops}{\Loops'}%% loops of full steps...
\newcommand{\Orbits}{\mathcal{P}}%% Contractible Hamiltonian 1 periodic orbits.
\newcommand{\CovOrbits}{\tilde{\Orbits}}%% ker(\omega\oplus c_{1}) covering.
\newcommand{\Crit}{\mathrm{Crit}}%% Morse critical points
\newcommand{\M}{\mathcal{M}}%after compactification
\newcommand{\sublevel}[3][]{{#2}^{#1\leq#3}}       %% sublevel of something
\newcommand{\zipped}[2]{\lfloor#1\rfloor_{#2}}%% something rel to a sublevel
\newcommand{\zip}[2]{\zeta_{#1}^{#2}}%% projection to a higher zipped sublevel 
\newcommand{\Action}{\mathcal{A}}
\newcommand{\Cov}[1]{\tilde{#1}}
\newcommand{\J}{\mathcal{J}}  %% Compatible almost complex structures
\begin{document}

\title{Floer-Novikov fundamental group and small flux symplectic isotopies}%

\author[1]{Jean-François Barraud}%
\author[2]{Agnès Gadbled}%

\affil[1]{Institut de Mathématiques de Toulouse ; UMR5219 Université de
  Toulouse ; CNRS.}
% \email{barraud@math.univ-toulouse.fr}%
%\address{%
%  Institut de mathématiques de Toulouse\\
%  Université Paul Sabatier -- Toulouse III\\
%  118 route de Narbonne\\
%  F-31062 Toulouse Cedex 9\\
%  FRANCE} %
%\thanks{This work was partially Supported by the CIMI}
%\subjclass[2000]{57R17; 53D40, 14F35}%
%\keywords{Symplectic topology, Fundamental group, Floer theory, Morse
%  theory, Homotopy, Arnold conjecture}

\affil[2]{Mathématiques Orsay; Université Paris Saclay ; CNRS.}

\maketitle

\begin{abstract}
  Floer theory relates the dynamics
  of Hamiltonian isotopies and the homology of the ambient manifold.
  It was extended to similarly relate the dynamics of symplectic isotopies
  and the Novikov homology associated to their flux. We discuss this picture
  regarding the fundamental group, and prove that when the flux is not too big,
  the associated Novikov fundamental group is generated by Floer
  moduli spaces associated to closed orbits of the symplectic isotopy.
\end{abstract}

\section{Introduction and main statement}

The celebrated Floer theory, introduced by Floer in \cite{Floer3,Floer2} as a
tool to prove the Arnold conjecture, is designed to study fixed points of
Hamiltonian isotopies or intersections of Lagrangian submanifolds under
deformation by such isotopies, from an homological point of view. Among
many other development of this theory, several authors
\cite{Sikorav1986},\cite{LeOno1995},\cite{Mihai2009},\cite{Agnes2009}
extended his ideas to the case of symplectic (non Hamiltonian) isotopies,
showing that the theory still makes sense, if the homology of the ambient
manifold is replaced by the Novikov homology associated to the flux of the isotopy. 

The goal of this paper is to study the same question from the fundamental
group point of view. In the Hamiltonian setting, the Floer theory is rich
enough to recover generators of the fundamental group of the ambient
(closed and monotone) manifold as explained in \cite{FloerPi1}. On the
other hand, to a degree $1$ cohomology class $[\alpha]$ on a closed
manifold $M$ and a choice of integration cover $\Cov{M}$ for $[\alpha]$ is
naturally associated a group $\pi_{1}(\Cov{M},[\alpha])$, that generalizes
the usual fundamental group to the Novikov setting, as explained in
\cite{NovikovPi1}. It is then natural to expect that the Floer
construction adapts from the Hamiltonian to the symplectic case,
replacing the fundamental group by the Novikov fundamental group.

\medskip

The main theorem of this paper is to show that it is indeed the case for
isotopies that have a small enough flux.

Consider a closed monotone symplectic manifold $(M,\omega)$
and a non degenerate symplectic isotopy $(\phi_{t})_{t\in[0,1]}$. Let
$X_{t}=\frac{d\phi_{t}}{dt}$ be  the vector field generating this
isotopy. The $1$-form
$$
\alpha_{\phi} = \int_{0}^{1}\omega(X_{t},\cdot)dt
$$
is then closed, and its cohomology class $[\alpha_{\phi}]$ is called the
flux of the isotopy (or its Calabi invariant \cite{LeOno1995}). This
cohomology class only depends on the homotopy class of the path
$(\phi_{t})$ with fixed ends.

Choose now an integration cover $\Cov{M}$ for $[\alpha_{\phi}]$. There might be
several possible choices, from the minimal one to the universal cover,
and we fix one once for all (the resulting group we are about to define
will depend on this choice, and each choice defines a different version
of the invariant, just like in the case of Novikov homology).

Pick an $\omega$ compatible almost complex structure $J$, which we allow
to depend on two parameters $(s,t)\in[0,1]\times\S^{1}$, and suppose it
is chosen generic, meaning that all the relevant Floer theoretic moduli
spaces are cutout transversely. Then a group $\Loops(\phi,J)$ can be
built out of theses moduli spaces with the following property~:

\begin{theorem}\label{thm:EvalOntoPi1}
  Let $(M,\omega)$ be a closed monotone symplectic manifold and
  $(\phi_{t})$ a (non Hamiltonian) symplectic isotopy as above. If the flux
  $[\alpha_{\phi}]$ is small enough, then there is a surjective map
  $$
  \begin{tikzcd}
  \Loops(\phi,J) \arrow[r,twoheadrightarrow]& \pi_{1}(\Cov{M},[\alpha_{\phi}])
  \end{tikzcd}
  $$
  from $\Loops(\phi,J)$ to the Novikov fundamental group associated to
  the flux of the isotopy.
\end{theorem}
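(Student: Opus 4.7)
The plan is to imitate the Hamiltonian construction of \cite{FloerPi1} in the Novikov setting, lifting everything to the integration cover $\Cov{M}$. I would first build a map $\eval\colon\Loops(\phi,J)\to\pi_{1}(\Cov{M},[\alpha_{\phi}])$ by evaluating rigid Floer trajectories at a marked point, then check that this map passes to the quotient defining $\Loops(\phi,J)$, and finally reduce surjectivity to the Morse case of \cite{NovikovPi1} via a continuation argument from $(\phi_t)$ to a $C^1$-small symplectic perturbation whose closed orbits are the critical points of an auxiliary generating function.

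\textbf{Construction of $\eval$.} Fix a base point $\tilde{x}_{0}\in\Cov{M}$. For every closed orbit $x$ of $(\phi_{t})$, fix once for all a lift $\tilde{x}$ in $\Cov{M}$ and a path $\ell_{\tilde{x}}$ from $\tilde{x}_{0}$ to a marked point on $\tilde{x}$. Each rigid Floer cylinder $u\in\M(x,y)$ produces, by evaluation at its marked point, a path $\gamma_{u}$ from $x$ to $y$ in $M$; lifting $\gamma_{u}$ starting at $\tilde{x}$ and concatenating with $\ell_{\tilde{x}}$ and $\ell_{\tilde{y}}^{-1}$ yields a loop based at $\tilde{x}_{0}$ in $\Cov{M}$. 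Reading off its class in $\pi_{1}(\Cov{M},[\alpha_{\phi}])$ defines $\eval$ on generators of $\Loops(\phi,J)$. The relations of $\Loops(\phi,J)$ are dictated by the boundary decomposition of the $1$-dimensional Floer moduli spaces: each boundary component is, thanks to monotonicity and the small flux hypothesis (which together exclude sphere bubbling below a controlled energy), a pair of broken rigid trajectories, and parametrised evaluation along a $1$-parameter family provides an explicit homotopy in $M$ between the product loop of the two broken limits and the constant loop. That this homotopy lifts coherently to $\Cov{M}$ is once again a consequence of the small flux: the $\omega$-area of the homotopy stays within the window allowed by the chosen integration cover.

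\textbf{Surjectivity and the main obstacle.} For surjectivity, I would compare $(\phi,J)$ with an auxiliary Morse pair $(f,g)$. When $[\alpha_{\phi}]$ is small enough, a rescaling and a $C^{1}$-small symplectic deformation (preserving the flux class) put us in the regime where the closed orbits of $(\phi_t)$ and its rigid Floer trajectories are canonically in bijection, via a continuation argument, with the critical points of $f$ and its negative gradient lines. Under this identification $\Loops(\phi,J)$ contains the Morse--Novikov group built in \cite{NovikovPi1}, which is already known to surject onto $\pi_{1}(\Cov{M},[\alpha_{\phi}])$, and this forces the desired surjectivity of $\eval$. The main obstacle is to make the continuation argument work on the cover: the continuation moduli spaces carry their own action, which varies along the deformation and may a priori allow trajectories to escape a given sublevel set of the Novikov action. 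Controlling this escape is precisely the role of the smallness of $[\alpha_{\phi}]$; identifying the admissible sublevel window and verifying that $\Loops(\phi,J)$ matches the Morse--Novikov model on this window is the technical heart of the proof.
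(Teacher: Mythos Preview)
Your proposal diverges from the paper at two essential points, and the second is a genuine gap.

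\textbf{What the generators are.} In the paper, $\Loops(\phi,J)$ is \emph{not} built from rigid Floer cylinders $u\in\M(x,y)$ between two orbits, evaluated at a marked point and closed up with reference paths $\ell_{\tilde x}$. It is built from the PSS-type (``augmentation'') moduli spaces $\M(y,\emptyset)$ of finite-energy solutions of a truncated Floer equation converging to an orbit $y$ at $-\infty$ and to a \emph{point} in $\Cov{M}$ at $+\infty$. The evaluation map is literally $u\mapsto u(+\infty)$, so a one-dimensional component of $\M(y,\emptyset)$ evaluates to an honest path in $\Cov{M}$, with no reference paths needed. The core analytic work (Section~2) is an energy/depth estimate, based on the monotonicity principle and the Gromov--Schwarz lemma, showing that $u\mapsto f(u(+\infty))$ is proper when the flux is small; this is what replaces your ``controlled energy window'' and what actually makes the Novikov inverse limit well defined. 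Your sphere-bubbling remark misses this: the role of smallness of the flux is not primarily to exclude bubbles but to guarantee that augmentation curves with diverging energy evaluate to points going to $-\infty$ in $\Cov{M}$.

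\textbf{Surjectivity.} The paper does \emph{not} use a continuation to a $C^{1}$-small isotopy. Instead it picks a Morse representative $\alpha$ of the flux and, for each index~$1$ critical point $b$ of the primitive $f$ on $\Cov{M}$, studies the two-dimensional \emph{hybrid} moduli space $\M(b,\{f\geq h\})$ of doubly-truncated Floer curves whose $-\infty$ end lands on $W^{u}(b)$. A ``crocodile walk'' around a boundary component of this surface produces a Floer--Novikov loop homotopic (in $\Cov{M}/\{f\leq h\}$) to the Morse step through $b$; this gives a factorisation $\Loops(f)\xrightarrow{\psi}\Loops(\phi)\xrightarrow{\eval}\pi_{1}(\Cov{M},[\alpha])$, and surjectivity follows from the Morse case of \cite{NovikovPi1}. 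Your continuation argument is problematic: small flux does \emph{not} make $\phi$ itself $C^{1}$-small (it can be an arbitrary Hamiltonian perturbation of a small-flux isotopy), so you would need to deform $\phi$ through a family of isotopies and prove that $\Loops(\phi,J)$ is invariant along this family. No such invariance is established in the paper, and it is not expected at the level of groups---only the image in $\pi_{1}$ is invariant, which is exactly what you are trying to prove, so the argument is circular as stated.
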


\begin{remark}
  Notice that a small flux does not mean a small isotopy~:
  Hamiltonian isotopies have a vanishing flux, but can still
  be arbitrary large. 
\end{remark}

%\begin{remark}
%  It would obviously be interesting to know if this result still holds
%  for large flux. We expect it is the case, but it requires a different
%  approach....................................... 
%\end{remark}

\begin{remark}
  The construction relies on curves that are typically used to define the
  PSS morphism \cite{PSS} between Floer and Morse homologies, and the key
  point is an energy/depth estimate for such curves to provide a control
  of such curves in the Novikov setting. In particular, the construction
  below could also provide a PSS morphism between the Floer to Morse Novikov
  homologies, as long as the flux of the symplectic isotopy used on the
  Floer side is small enough.
\end{remark}

As an obvious corollary, we obtain a way to detect fixed points of
symplectic isotopies.

\begin{proposition}
  In the situation of theorem  \ref{thm:EvalOntoPi1}, if
  $\pi_{1}(\Cov{M},[\alpha_{\phi}])\neq 1$, then $\phi$ has fixed points.
\end{proposition}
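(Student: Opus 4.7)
The plan is to argue by contrapositive and use Theorem \ref{thm:EvalOntoPi1} as a black box. Fixed points of $\phi=\phi_{1}$ correspond bijectively to $1$-periodic orbits of the isotopy $(\phi_{t})$; under the standing non-degeneracy assumption, the absence of fixed points amounts to saying that the set $\Orbits$ of closed orbits is empty.

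Next I would invoke the construction advertised in the abstract and the statement of the theorem: the group $\Loops(\phi,J)$ is built out of Floer moduli spaces whose asymptotics are prescribed by (lifts to $\Cov{M}$ of) the closed orbits of $(\phi_{t})$. When $\Orbits=\varnothing$, none of these generating moduli spaces exist, so $\Loops(\phi,J)$ is the trivial group. The surjection
\[
\Loops(\phi,J)\twoheadrightarrow\pi_{1}(\Cov{M},[\alpha_{\phi}])
\]
supplied by Theorem \ref{thm:EvalOntoPi1} then forces $\pi_{1}(\Cov{M},[\alpha_{\phi}])$ to be trivial as well, contrary to the hypothesis. Hence $\phi$ must have a fixed point.

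There is no real obstacle here beyond Theorem \ref{thm:EvalOntoPi1} itself. The one point to verify, once the precise definition of $\Loops(\phi,J)$ is fixed, is that the group really is supported entirely on data anchored on $\Orbits$, with no formal identity generator surviving when $\Orbits=\varnothing$; given the PSS-type construction sketched in the preceding remark, this is immediate from the definitions.
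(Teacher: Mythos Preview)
Your argument is correct and is exactly what the paper intends: it presents this proposition as an ``obvious corollary'' of Theorem~\ref{thm:EvalOntoPi1} and gives no further proof. The only cosmetic point is that $\Orbits$ in the paper denotes the \emph{contractible} $1$-periodic orbits (lifted to $\Cov{M}$), but since the absence of fixed points rules out all $1$-periodic orbits, your reasoning goes through verbatim.
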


\begin{remark}
  More explicit examples are easier to derive from the Lagrangian
  version...
\end{remark}

\subsection{Moduli spaces}
Let $\J$ be the space of $\omega$-compatible almost complex structures on
$M$ that depend on two parameters $(s,t)\in[0,1]\times\S^{1}$, and such
that $J(0,t)$ is constant.

The main ingredient in the construction of the
group of Floer loops $\Loops(\phi,J)$ are the PSS-like moduli spaces
$$
\M(y,\emptyset)
$$
associated to an almost complex structure $J\in\J$, i.e. moduli spaces of
maps $u:\R\times\S^{1}\to \Cov{M}$, with finite energy, that are solutions of the
``truncated'' Floer equation
\begin{equation}
  \label{eq:augmentation_equation}
\frac{\partial u}{\partial s}+J_{\chi(s),t}(u)
\Big(\frac{\partial u}{\partial t}-\chi(s)X_{t}(u)\Big) = 0.
\end{equation}
Here, the cutoff function $\chi(s)$ is a smooth function such that
$\chi(s)=1$ for $s\leq -1$ and $\chi(s)=0$ for $s\geq 0$, and the
almost complex structure $J$ is in fact the lift of $J$ to $\tilde{M}$.

Solutions of this equation with finite energy do have limits at the ends,
which are
\begin{itemize}
\item
  a $1$-periodic orbit $y$ of $X$ at $-\infty$ (in $\tilde{M}$),
\item
  an point $p\in\Cov{M}$ at $+\infty$.
\end{itemize}

More precisely, we consider two lifts of the periodic orbits~: first we
consider $X$ as a vector field on $\Cov{M}$, and the set $\Orbits$ of its
contractible periodic orbits consists of all the lifts of the
contractible periodic orbits in $M$. Second, we consider the covering
$\CovOrbits$ of $\Orbits$ obtained by considering discs bounded by periodic orbits
under the equivalence relation~:
$$
\gamma\sim\gamma' \iff
\omega(\gamma)=\omega(\gamma') \text{ and }
\mu_{CZ}(\gamma)=\mu_{CZ}(\gamma'),
$$
where $\mu_{CZ}$ denotes the Conley Zehnder index. From now on, we will
avoid stressing the use of these coverings all along the paper, and when
speaking of a ``periodic orbits of $X$'', we will in fact refer to an
element in $\CovOrbits$.

In particular, in the situation above, the curve $u$ defines a disc
bounded by the periodic orbit at $-\infty$, and we will see the limit $y$
as an element of $\CovOrbits$ rather than $\Orbits$.

For convenience, we will use the following shifted index rather than the
Conley Zehnder index on $\CovOrbits$~:
$$
|y| = \mu_{CZ}(y)+n
$$
(where $n=\frac{1}{2}\dim(M)$). Then, for a generic choice of $J$, the
moduli space $\M(y,\emptyset)$ is a smooth manifold and
$$
\dim \M(y,\emptyset)=|y|.
$$

We are interested in the connected components of such $1$ dimensional
moduli spaces.

\paragraph{Compactification of bounded ends}

Consider a connected component of $\M(y,\emptyset)$ for some periodic
orbit $y$ such that $|y|=1$. We are interested in the case when it is not
closed (i.e.compact without boundary). Since it is a one dimensional
manifold, it has two ends, each of them being homeomorphic to a half line
$[0,\infty)$.
\begin{itemize}
\item
  If the energy is bounded on an end~: it is said to be a bounded end, and
  it can be compactified by adding a broken configuration
  $(u,v)\in\M(y,x)\times\M(x,\emptyset)$ through an intermediate orbit
  $x$ with $|x|=0$.
\item
  Otherwise it is said to be an unbounded end.
\end{itemize}

From now on, $\M(y,\emptyset)$ will denote the moduli space with
compactified bounded ends.

The minimal requirement to control the unbounded ends is to show that
some notion of depth is proper on $\M(y,\emptyset)$, which is the object
of the next section.

\begin{figure}
  \centering
  \includegraphics[scale=.5]{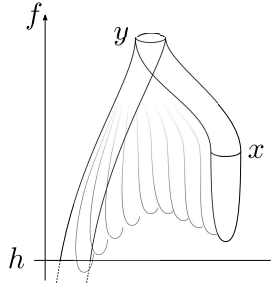}
  \caption[Step]{Bounded and unbounded ends in $\M(y,\emptyset)$.}
  \label{fig:step}
\end{figure}

\section{Energy/depth estimates}\label{sec:EnergyDepthEstimates}

Let $H_{t}:\Cov{M}\to\R$ be a Hamiltonian on $\Cov{M}$ generating
the isotopy i.e. such that
$$
dH _{t}= -\omega(X_{t},\cdot).
$$

We use the deformation lemma from \cite[lemma 2.1, p.157]{LeOno1995}, to
modify the isotopy, keeping its ends fixed, so that the cohomology class
$[-\omega(X_{t},\cdot)]$ is in fact constant equals to the flux 
$[\alpha]$.

We pick a primitive $\Cov{M}\xrightarrow{f}\R$ of $\pi^{*}\alpha$. We
will refer to the values of $f$ at a point $p$ as its height in $\Cov{M}$
with respect to $[\alpha]$.

Notice that $dH_{t}-df$ descends to $M$ as an exact form, so that there
is a constant $K$ such that
\begin{equation}
  \label{eq:distHf}
  \Vert H-f \Vert_{\infty}\leq K.
\end{equation}

\subsection{Average depth estimate}

Let $u\in\M(y,\emptyset)$ be a solution of the truncated Floer equation
\eqref{eq:augmentation_equation} as above. 

Recall that the energy of $u$ is
$$
E(u)=\iint ||\frac{\partial u}{\partial s}||^2dsdt.
$$

The following straightforward computation:
\begin{align*}
  E(u)
  &=\iint \omega(\frac{\partial u}{\partial s},J\frac{\partial u}{\partial s})dsdt\\
  &=\iint \omega(\frac{\partial u}{\partial s},\frac{\partial u}{\partial t})dsdt - 
    \iint \omega(\frac{\partial u}{\partial s},X)\chi(s) dsdt\\
  &=\iint u^*\omega dsdt - 
    \iint dH_{t}(\frac{\partial u}{\partial s})\chi(s) dsdt\\
  &=\iint u^*\omega dsdt - 
    \int [H_{t}(u)\chi(s)]_{s=-\infty}^{s=+\infty} + \iint H_{t}(u)\chi'(s)ds dt\\
  &=\iint u^*\omega dsdt + 
    \int H_{t}(y(t))dt + \iint H_{t}(u)\chi'(s)ds dt\\
  &= \Action(y) - \iint H_{t}(u)|\chi'(s)|ds dt
    \stepcounter{equation}
    \tag{\theequation}\label{eq:Energy_DepthAverage}
\end{align*}
shows that for a fixed action $\Action(y)$, the energy is related to the
average of $H$ in the cutoff region $\{-1\leq s\leq0\}\times\S^{1}$ with respect to
the measure $|\chi'(s)|dsdt$, and hence to the average depth of $u$ in
this region.

However, we want to evaluate one dimensional moduli spaces as paths in
$\Cov{M}$ and keep control of the depth along such paths~: this requires
point-wise estimates that cannot be directly derived from the above
average estimate in general. 

\medskip

%There are at least two natural directions to overcome this. The first is
%one is to look for a way to apply some maximum principle to the height
%function on holomorphic discs. This can indeed be achieved if the almost
%complex structure does support a global psh function (this prevents the
%manifold from being closed however). It is the case on Weinstein domains
%for instance, for almost complex structures that are close enough to
%integrable ones. However, even in cotangent bundles, this restriction on
%the almost complex structures is far from explicit.

The object of the next section is to obtain such a point-wise estimate
when the flux is not too big, based on the monotonicity principle and the
Schwarz lemma  for pseudo-holomorphic curves observed by Gromov in
\cite{Gromov1985}.

\medskip

Before proceeding, we need to upgrade the average depth estimate
\eqref{eq:Energy_DepthAverage} on the region $-1\leq s\leq 0$ into
estimates that are pointwise with respect to $s$.

Namely, let
$$
m(s) = \int_{0}^{1}H_{t}(u(s,t))dt
$$
Then $m'(s) = \int_{0}^{1}\omega(\frac{\partial u}{\partial s}, X)dt$,
and for $s_{0}\leq s_{1}$ we have~:
\begin{align*}
  m(s_{1})-m(s_{0})
  &= \int_{s=s_{0}}^{s=s_{1}}\int_{t=0}^{t=1}
    \omega(\frac{\partial u}{\partial s}, X)dtds
  \\
  &= \iint_{s_{0}\leq s\leq s_{1}}
    <\frac{\partial u}{\partial t}, X> - \chi \Vert X\Vert^{2}dtds
  \\
  &\leq
    \left(\iint_{s_{0}\leq s\leq s_{1}} \Vert\frac{\partial u}{\partial t}\Vert^{2}dsdt\right)^{1/2}
    \left(\iint_{s_{0}\leq s\leq s_{1}} \Vert X \Vert^{2} ds dt \right)^{1/2}
  \\
  &\leq \sqrt{E(u)}\  \sqrt{s_{1}-s_{0}} \ \Vert X \Vert_{\infty}
\end{align*}
where $\Vert X \Vert_{\infty} = \sup\{\omega(X_{t}(p),J_{s,t}X_{t}(p)),
(s,t)\in[0,1]\times\S^{1}, p\in M\}$. On the other hand, from
\eqref{eq:Energy_DepthAverage}, there is at least one $s_{0}\in[-1,0]$
such that $m(s_{0}) = \Action(y)-E(u)$. This implies that for $s\geq 0$
we have~:
\begin{equation}
  \label{eq:EnergyEstimateAts=1}
  \int_{0}^{1} H_{t}(u(s,t))dt\leq -E(u)+
  \Vert X \Vert_{\infty}\sqrt{s+1}\sqrt{E(u)} + \Action(y)
\end{equation}
and hence, letting $\sigma_{u}=\frac{E(u)}{4\Vert X \Vert^{2}}-1$, we have
for all $s\in[0,\sigma_{u}]$~:
\begin{equation}\label{eq:EnergyEstimateOnTransitionAnnulus}
  \forall s\in[0,\sigma_{u}]:\
  \int_{0}^{1} H_{t}(u(s,t))dt\leq -\frac{E(u)}{2}+\Action(y).
\end{equation}

%%%%In particular, if $(u_{n})$ is a sequence in $\M(y,\emptyset)$~:
%%%%$$
%%%%\lim_{n\to+\infty}
%%%%E(u_{n})=+\infty
%%%%\ \Rightarrow\ 
%%%%\lim_{n\to+\infty} \int_{0}^{1} H(u_{n}(1,t))dt=-\infty.
%%%%$$
%%%%\begin{remark}
%%%%  An estimate like \eqref{eq:EnergyEstimateAts=1} can obviously be
%%%%  derived for arbitrary (fixed) $s$. The choice $s=1$ is arbitrary here,
%%%%  we only want a loop around the tube in the holmoprhic region that stays
%%%%  uniformly away from the transition region.
%%%%\end{remark}

\subsection{Depth estimate at $+\infty$.}

%In the real line $\R\Xi\subset H^{1}(M,\R)$, we fix a generator $\Xi_{0}$
%and write
%$$
%\Xi=\lambda\Xi_{0}.
%$$
%Notice that the covering $\Cov{M}\xrightarrow{\pi}M$ is an
%integration cover for all the classes $\lambda\Xi_{0}$ at once.
%
%
%We choose a one form $\alpha_{0}$ in the class $\Xi_{0}$, and a primitive
%$\Cov{M}\xrightarrow{f_{0}}\R$ of $\pi^{*}\alpha_{0}$. We refer to the
%values of $f_{0}$ at a point $p$ as its \emph{normalized} height, in
%contrast to the value of $f=\lambda f_{0}$ will be referred to as its
%$\lambda$-height or simply its height.

On the line $\R[\alpha]$, we pick a generator $[\alpha_{0}]$, and a
primitive $f_{0}$ of a form $\alpha_{0}$ in this class. We define
$\lambda$ as
\begin{equation}
  \label{eq:lambda}
  [\alpha] = \lambda [\alpha_{0}]
\end{equation}

Let $\delta_{0}$ be a non trivial period of $[\alpha_{0}]$ (i.e. a positive
real number such that there is some $g\in\pi_{1}(M)$ with
$\alpha_{0}(g)=\delta_{0}$), and consider the slice
$$
S = f_{0}^{-1}([0,\delta_{0}])\subset\Cov{M}
$$
Notice that $\Cov{M}$ is a union of copies of this slice under deck
transformations.

\medskip

The main remark we'll make use of to relate the depth at $u(+\infty)$ and
the energy of our curves is that holomorphic curves need a minimal energy
to go through a slice. This is a direct consequence of the monotonicity
principle for $J$-holomorphic curves described by Gromov \cite{Gromov1985}.
However, since our curves are not everywhere holomorphic, we will also
need to study the behavior of curves for which transition region between
the Floer and the purely holomorphic areas is stretched across a large
height.

\subsubsection{Energy of holomorphic discs with low boundary and  high center}

First recall the crucial monotonicity
principle for $J$ holomorphic curves, due to M. Gromov.
\begin{lemma}\label{lem:Monotonicity}
  Given an $\omega$ compatible almost complex structure $J$, there are
  constants $r_{0}$ and $C$ such that, for every $r\in(0,r_{0}]$, for
  every point $x\in\Cov{M}$ and every $J$-holomorphic map $u: \Sigma\to
  \Cov{M}$ defined on a Riemann surface $\Sigma$ such that
  \begin{itemize}
  \item
    $u(\Sigma)\subset B(x,r)$,
  \item
    $u(\partial\Sigma)\subset\partial B(x,r)$
  \item
    $u$ goes through the center of the ball,
  \end{itemize}
  then $\iint_{\Sigma} u^{*}\omega\geq C r^{2}$
\end{lemma}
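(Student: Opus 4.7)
The plan is to reduce the statement to the classical monotonicity formula for minimal surfaces in nearly Euclidean balls. Equip $\Cov{M}$ with the Riemannian metric $g_{J}=\omega(\cdot,J\cdot)$. Since $J$ is lifted from an $\omega$-compatible structure on the compact manifold $M$ and deck transformations act as $g_{J}$-isometries, every geometric constant we will need can be taken uniform in the basepoint $x\in\Cov{M}$. For any $J$-holomorphic map $u:\Sigma\to\Cov{M}$ one has the pointwise identity $u^{*}\omega=\tfrac{1}{2}|du|_{g_{J}}^{2}\,d\mathrm{vol}_{\Sigma}$, so $u^{*}\omega$ is non-negative and coincides with the area form of the pulled-back metric; in particular $u$ is a (possibly branched) conformal minimal immersion, and $\iint_{\Sigma}u^{*}\omega$ is the area of the image counted with multiplicity.

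First I would fix $r_{0}>0$ small enough that, uniformly in $x\in\Cov{M}$, the exponential map at $x$ is a diffeomorphism on the $r_{0}$-ball and the sectional curvatures of $g_{J}$ are bounded on $B(x,r_{0})$; this is possible because the relevant properties descend to the compact quotient $M$. For $s\in(0,r]$ define
$$
a(s)=\iint_{u^{-1}(B(x,s))}u^{*}\omega,
$$
and let $L(s)$ denote the length of $u^{-1}(\partial B(x,s))$ in the pulled-back metric. The coarea formula applied to the $1$-Lipschitz function $\rho=d(x,\cdot)\circ u$ gives $a'(s)\geq L(s)$ for almost every $s$, while the Euclidean-comparison isoperimetric inequality for minimal surfaces in a geodesic ball yields $a(s)\leq \tfrac{s}{2}L(s)\bigl(1+C_{0}s^{2}\bigr)$, with $C_{0}$ controlled by the curvature bounds above. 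The hypotheses $u(\Sigma)\subset B(x,r)$ and $u(\partial\Sigma)\subset\partial B(x,r)$ enter precisely here: for $s<r$ they ensure that the subregion $u^{-1}(B(x,s))\subset\Sigma$ has its boundary entirely on the level set $\{\rho=s\}$, not on $\partial\Sigma$, so that the coarea computation and the isoperimetric inequality apply to a bona fide piece of minimal surface.

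Combining the two estimates produces a differential inequality of the form $a'(s)\geq \tfrac{2a(s)}{s}(1-C_{1}s^{2})$, equivalent to saying that $e^{C_{1}s^{2}}a(s)/s^{2}$ is non-decreasing on $(0,r_{0}]$. Since $u$ passes through $x$, a local computation in holomorphic coordinates near a preimage of $x$ (where $u$ has the form $z\mapsto z^{k}\cdot(\text{unit})$ for some integer $k\geq 1$) shows that $\lim_{s\to 0^{+}}a(s)/s^{2}\geq\pi$. Monotonicity propagates this lower bound to $s=r$, yielding $a(r)\geq\pi e^{-C_{1}r_{0}^{2}}r^{2}$, which is the stated inequality with $C=\pi e^{-C_{1}r_{0}^{2}}$. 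The main obstacle is establishing the uniform isoperimetric inequality on $(\Cov{M},g_{J})$; once the curvature and injectivity radius bounds are made uniform via compactness of $M$ and the deck group action, the remaining argument is the monotonicity formula in its most classical form.
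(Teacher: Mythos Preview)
The paper does not prove this lemma at all: it is stated as the ``crucial monotonicity principle for $J$-holomorphic curves, due to M.~Gromov'' and simply recalled from the literature (with \cite{Gromov1985} and implicitly \cite{AL1994} as references). Your proposal, by contrast, supplies an actual proof sketch.

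Your argument is the standard one and is essentially correct. The identification of $u^{*}\omega$ with the pulled-back area form, the fact that $J$-holomorphic curves are minimal for $g_{J}$, the uniformity of geometric constants via the compact quotient $M$, and the differential inequality obtained by combining coarea with the first-variation estimate $a(s)\leq\tfrac{s}{2}L(s)(1+O(s^{2}))$ are all the right ingredients. The only point worth tightening is the step you call the ``Euclidean-comparison isoperimetric inequality'': this is not an isoperimetric inequality in the usual sense but rather the divergence identity $\int_{\Sigma\cap B(x,s)}\mathrm{div}_{\Sigma}(\nabla\tfrac{1}{2}\rho^{2})=\int_{\partial(\Sigma\cap B(x,s))}\rho\,\langle\nabla\rho,\nu\rangle$ together with the minimality of $\Sigma$, which in curved ambient space gives $\mathrm{div}_{\Sigma}(\nabla\tfrac{1}{2}\rho^{2})\geq 2(1-C_{0}\rho^{2})$ rather than exactly $2$. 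Stating it this way makes clear where the curvature correction enters and why the bounded-geometry hypothesis on $r_{0}$ is needed. With that clarification your sketch is a complete outline of the classical proof found, for instance, in \cite{AL1994}.
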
 

A direct consequence of the monotonicity principle is that holomorphic
curves that go across a slice $S$ of $\Cov{M}$ have a symplectic area
bounded from below. 

\begin{lemma}\label{lem:EnergyDepthEstimateForDiscs}
  There are constants $K_{1}$ and $K_{2}$ (depending only on $M$, 
  $\omega$, $J$, $\Xi_{0}$ but not on $\lambda$, $X$ or $H$) such that,
  for every $u\in\M(y,\emptyset)$ and every $s_{0}\geq 0$~:
  \begin{equation}
    \label{eq:EnergyEstimateForHighCenter}
    E(u)\geq \frac{K_{1}}{\lambda} \Big(f(u(+\infty))-\max_{\{s=s_{0}\}}(f(u))\Big) - K_{2}
  \end{equation}
\end{lemma}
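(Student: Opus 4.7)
The proof plan is to exploit that for $s \geq 0$ the cutoff $\chi(s)$ vanishes, so $u|_{[s_0,\infty)\times\S^1}$ is honestly $J$-holomorphic, and to count how many elementary slices in $\Cov{M}$ this holomorphic half-cylinder must traverse in climbing from $\max_{s=s_0} f(u)$ up to $f(u(+\infty))$. Each slice crossing will contribute a uniform minimal amount of energy via the Gromov monotonicity Lemma \ref{lem:Monotonicity}.

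First I would translate heights between $f$ and the reference primitive $f_0$: the difference $f - \lambda f_0$ is a primitive of $\pi^*(\alpha-\lambda\alpha_0)$, which descends to $M$ (since $[\alpha] = \lambda[\alpha_0]$), so $f-\lambda f_0$ is bounded by a constant. After division by $\lambda$ in the final inequality this bound contributes only to the additive constant $K_2$; alternatively, by choosing the representative $\alpha = \lambda\alpha_0$ of the flux one can make $f-\lambda f_0$ exactly constant. Set $\Delta_0 = f_0(u(+\infty)) - \max_{s=s_0}(f_0\circ u)$, so that $\lambda\Delta_0 = f(u(+\infty)) - \max_{s=s_0}(f\circ u) + O(1)$.

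Next, fix $r_0$ smaller than both the Gromov monotonicity radius and $\delta_0/(2\Vert df_0\Vert_\infty)$, and let $\eta = 2 r_0 \Vert df_0\Vert_\infty$. Place heights $c_k = \max_{s=s_0}(f_0\circ u) + k\eta$ for $k=1,\dots,N$ with $N$ maximal so that $c_N < f_0(u(+\infty))$; thus $N \geq \Delta_0/\eta - 1$. By intermediate value applied to $f_0\circ u$ along any $t$-line of the connected half-cylinder $[s_0,\infty)\times\S^1$, for each $k$ there is a point $w_k$ with $f_0(u(w_k))=c_k$. The balls $B(u(w_k),r_0)\subset\Cov{M}$ are pairwise disjoint because their centers differ in $f_0$-value by at least $\eta > 2r_0\Vert df_0\Vert_\infty$. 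Let $\Sigma_k$ be the connected component of $u^{-1}(B(u(w_k),r_0))$ containing $w_k$; the convergence $u(s,\cdot)\to u(+\infty)$ to a point with $f_0$-value far above $c_k$ ensures $\Sigma_k$ is precompact with $u(\partial\Sigma_k)\subset\partial B(u(w_k),r_0)$, so Lemma \ref{lem:Monotonicity} applies and gives $\iint_{\Sigma_k}u^*\omega\geq C r_0^2$. Summing these disjoint contributions on the holomorphic region (where $u^*\omega\geq 0$),
\[
E(u)\;\geq\;\sum_{k=1}^N \iint_{\Sigma_k} u^*\omega\;\geq\;N C r_0^2\;\geq\;\frac{C r_0^2}{\eta}\,\Delta_0 - C r_0^2,
\]
and substituting $\Delta_0 = \bigl(f(u(+\infty)) - \max_{s=s_0}(f\circ u)\bigr)/\lambda + O(1/\lambda)$ yields the desired inequality with $K_1 = C r_0^2/\eta$ and $K_2$ absorbing the remaining additive terms.

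The principal obstacle is keeping every constant genuinely independent of $\lambda$ and the Hamiltonian. This forces either fixing the representative $\alpha=\lambda\alpha_0$ via a preliminary modification of the isotopy, or else carefully tracking that the oscillation of $f-\lambda f_0$ contributes only a $(1/\lambda)\cdot O(1)$ correction absorbed into $K_2$. A secondary subtlety is the compactness of the $\Sigma_k$ near $+\infty$: the single component containing the limit point must be discarded or handled separately, costing at most one extra $\eta$ in the slice count, hence a constant addition to $K_2$.
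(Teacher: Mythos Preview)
Your proposal is correct and follows essentially the same route as the paper: restrict to the genuinely holomorphic half-cylinder $\{s\geq s_{0}\}$, count the number of disjoint $f_{0}$-slices it must traverse to climb from $\max_{s=s_{0}}f_{0}(u)$ to $f_{0}(u(+\infty))$, and apply Gromov monotonicity once per slice. The paper organises the slices as deck-translates of the fundamental strip $S=f_{0}^{-1}([0,\delta_{0}])$ and uses compactness of the inner third $S'$ to choose a uniform radius $R$ with $B(p,R)\subset S$ for $p\in S'$, whereas you pick centres at explicit $f_{0}$-heights spaced by $\eta=2r_{0}\Vert df_{0}\Vert_{\infty}$ and deduce disjointness directly from the Lipschitz bound; both yield $E(u)\geq (\text{const})\cdot\Delta_{0}-(\text{const})$ with constants independent of $\lambda$. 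Your extra care about the component touching $+\infty$ and about the translation $f=\lambda f_{0}+c$ is appropriate (the paper implicitly takes $\alpha=\lambda\alpha_{0}$ so that $f-\lambda f_{0}$ is constant, which is the clean choice you also suggest).
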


%%\begin{remark}
%%  Using $f=\lambda f_{0}$ to measure height, this means that~:
%%  \begin{equation}
%%    \label{eq:EnergyEstimateForHighCenter}
%%    E(u)\geq \frac{K_{1}}{\lambda}
%%    \Big( f(u(+\infty))-\max_{\{s=s_{0}\}}(f(u))\Big) - K_{2}.
%%  \end{equation}
%%\end{remark}

\begin{proof}
Recall we let $S=f_{0}^{-1}([0,\delta_{0}])$, and consider
$S'=f_{0}^{-1}([\frac{\delta_{0}}{3},\frac{2\delta_{0}}{3}])$. Recall
that the almost complex structure $J(s,t)$ is in fact constant for $s\geq
0$, and consider a radius $r_{0}$ and a constant $C$ associated to this
almost complex structure by lemma\ref{lem:Monotonicity}. Pick a radius
$R\leq r_{0}$ such that at every point $p\in S'$, the $R$-ball at $p$ is
contained in $S$.

The holomorphic disc $u_{|_{\{s\geq s_{0}\} } }$ has to cross at least $N$
copies of $S$ where
$$
N = \left\lfloor
\frac{f_{0}(u(0))-\max_{\{s=s_{0}\}}(f_{0}(u))}{\delta_{0}}
\right\rfloor-2.
$$

For each copy $S_{i}$ of $S$, pick an $R$-ball centered at a point
$p_{i}=u(z_{i})\in S'_{i}$ for $z_{i}\in D$~: from lemma
\ref{lem:Monotonicity}, we obtain
$$
E(u)\geq N CR^{2}
$$
which leads to the desired estimate. 
\end{proof}

\subsubsection{Upper bounds for the height on the transition annulus.}

For a curve $u\in\M(y,\emptyset)$, recall the notation $
\sigma_{u}=\frac{E(u)}{4\Vert X \Vert^{2}}-1 $ from
\eqref{eq:EnergyEstimateOnTransitionAnnulus}, and consider the annulus
$$
A_{u}=[0,\sigma_{u}]\times\S^{1}.
$$

We now want to prove that the annulus $A_{u}$ does indeed contain a loop
$\{s=cst\}$ that does not go above a deep level~:
\begin{lemma}\label{lem:TransitionUpperBound}
  There is a constant $K_{3}$ (depending on $M$, $\omega$, $J$,
  $\phi$) such that, for every $u\in\M(y,\emptyset)$, there is some
  $s_{0}\geq 0$ such that
  $$
  \max_{t\in\S^{1}} \{f(u(s_{0},t))\}\leq \Action(y)-\frac{E(u)}{2} + K_{3}
  $$
\end{lemma}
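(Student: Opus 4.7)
The plan is to upgrade the average bound \eqref{eq:EnergyEstimateOnTransitionAnnulus} into a pointwise bound by controlling the $t$-oscillation of $f\circ u$ along a carefully chosen slice $\{s_0\}\times\S^{1}$ of the transition annulus $A_u$. The key input is that on $A_u$ the cutoff satisfies $\chi\equiv 0$, so $u$ is genuinely $J$-holomorphic there; consequently the total energy directly controls the $L^{2}$-norm of $\partial_{t}u$ on this annulus.

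First, combining \eqref{eq:EnergyEstimateOnTransitionAnnulus} with \eqref{eq:distHf}, I would rewrite the average estimate in terms of $f$: for every $s\in[0,\sigma_u]$,
$$\int_{0}^{1} f(u(s,t))\,dt \leq \Action(y)-\frac{E(u)}{2}+K.$$
On the other hand, $J$-holomorphicity on $A_u$ forces $\Vert\partial_{s}u\Vert^{2}=\Vert\partial_{t}u\Vert^{2}$, so $\iint_{A_u}\Vert\partial_{t}u\Vert^{2}\,ds\,dt\leq E(u)$, and a pigeonhole over $s\in[0,\sigma_u]$ produces a slice $s_0$ with $\int_{0}^{1}\Vert\partial_{t}u(s_0,t)\Vert^{2}\,dt\leq E(u)/\sigma_u$. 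By the very definition of $\sigma_u$, this quantity is uniformly bounded by $8\Vert X\Vert_\infty^{2}$ as soon as $E(u)\geq 8\Vert X\Vert_\infty^{2}$.

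Next, the $t$-oscillation of $f\circ u$ along $\{s_0\}\times\S^{1}$ is controlled via $df=\pi^{*}\alpha_\phi$ and Cauchy--Schwarz: for any $t_1,t_2\in\S^{1}$,
$$\bigl|f(u(s_0,t_1))-f(u(s_0,t_2))\bigr| \leq \Vert\alpha_\phi\Vert_\infty\int_{0}^{1}\Vert\partial_{t}u(s_0,t)\Vert\,dt \leq \Vert\alpha_\phi\Vert_\infty\sqrt{E(u)/\sigma_u},$$
which is uniformly bounded in the same regime by a constant $C(\phi)$ depending only on $\Vert\alpha_\phi\Vert_\infty$ and $\Vert X\Vert_\infty$. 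Combining this oscillation bound with the average bound yields the desired inequality with $K_3=K+C(\phi)$.

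The hard part will be the low-energy regime $E(u)<8\Vert X\Vert_\infty^{2}$, where $\sigma_u\leq 1$ and the pigeonhole step degenerates. Here $E(u)$ is itself bounded, so the term $-E(u)/2$ contributes only a bounded deficit that can be absorbed in $K_3$; the remaining task is to obtain a uniform upper bound on $\max_{t}f(u(s_0,t))-\Action(y)$ for such curves, which should follow from a standard Gromov-compactness argument made uniform over lifts in $\CovOrbits$ by fixing the action class.
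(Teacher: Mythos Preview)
Your argument is correct and in fact more elementary than the route taken in the paper. Both proofs reduce to showing that the $t$-oscillation $\Delta_{f}(s_{0})=\max_{t}f(u(s_{0},t))-\min_{t}f(u(s_{0},t))$ is uniformly bounded at some $s_{0}\in[0,\sigma_{u}]$, and both dispatch the low-energy regime by a compactness argument. The difference lies in how the oscillation bound is obtained in the high-energy regime. The paper chooses $s_{0}$ to minimise $\Delta_{f}(s)$ over $[0,\sigma_{u}]$ and then argues by contradiction via the Gromov--Schwarz lemma: a large minimal oscillation forces a point of large derivative on \emph{every} slice $\{s\}\times\S^{1}$, hence (by Gromov--Schwarz) a definite amount of energy $a_{0}$ in each of roughly $\sigma_{u}/r$ disjoint discs, which overcounts $E(u)$. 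You instead pick $s_{0}$ by an $L^{2}$ mean-value argument on $\int_{0}^{1}\Vert\partial_{t}u(s,t)\Vert^{2}dt$ over the holomorphic annulus $A_{u}$, and then bound the oscillation directly by Cauchy--Schwarz; the ratio $E(u)/\sigma_{u}$ is uniformly controlled precisely because $\sigma_{u}$ was chosen proportional to $E(u)$. Your approach bypasses the Gromov--Schwarz lemma entirely and yields an explicit constant $K_{3}$ of the form $K+\Vert\alpha_{\phi}\Vert_{\infty}\cdot 2\sqrt{2}\,\Vert X\Vert_{\infty}$ in the high-energy regime. The paper's route, while heavier, has the mild advantage of not needing a separate a priori bound on $\int_{0}^{1}\Vert\partial_{t}u\Vert^{2}$ on the chosen slice, but this is a wash since that bound is immediate from holomorphicity.

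One small point: your final sentence on the low-energy case is a sketch rather than a proof, but the paper's treatment of that regime is equally brief (``$u$ belongs to a compact subset of the moduli space''), and the invariance of $\max_{t}f(u(s_{0},t))-\Action(y)$ under deck transformations combined with Gromov compactness over the finitely many underlying orbits in $M$ does give what is needed.
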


Let
\begin{equation}
  \label{eq:magnitude}
  \Delta_{f}(s)=
  \max_{t\in\S^{1}}f(u(s,t))-\min_{t\in\S^{1}}f(u(s,t))
\end{equation}
and consider a point $s_{0}\in[0,\sigma_{u}]$ where this magnitude is
minimal~:
$$
\Delta_{f}(s_{0}) = \min_{s\in[0,\sigma_{u}]}\Delta_{f}(s).
$$
Recall that $|f-H|$ is uniformly bounded on $\Cov{M}$. Using
\eqref{eq:EnergyEstimateOnTransitionAnnulus} in the last line of the
following estimates
\begin{align*}
  \max_{t\in\S^{1}}f(u(s_{0},t))
  &\leq
    \int_{0}^{1} f(u(s,t))dt+\Delta_{f}(s_{0})\\
  &\leq
    \int_{0}^{1} H_{t}(u(s,t))dt+\Vert f-H \Vert_{\infty}+\Delta_{f}(s_{0})\\
  &\leq
    \Action(y)-\frac{E(u)}{2}+\Vert f-H \Vert_{\infty}+\Delta_{f}(s_{0}),
\end{align*}
we obtain that the proof of lemma \ref{lem:TransitionUpperBound} reduces
 to the following lemma~:
\begin{lemma}    \label{lem:TransitionHeightBound}
  There is a uniform constant $K_{3}$ (depending on $M$, $\omega$,
  $J$,$[\alpha]$ and $\phi$), such that for all $u\in\M(y,\emptyset)$~:
  \begin{equation}
    \label{eq:TransitionHeightBound}
    \min_{s\in[0,\sigma_{u}]}\Delta_{s}f(u)\leq  K_{3}
  \end{equation}
\end{lemma}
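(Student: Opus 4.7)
Since the cutoff $\chi(s)$ vanishes identically on $s\geq 0$, equation \eqref{eq:augmentation_equation} reduces on $[0,\sigma_u]\times\S^{1}$ to the plain $J$-holomorphic equation, so $u$ is a genuine $J$-holomorphic annulus there; in particular $\Vert\partial_t u\Vert=\Vert\partial_s u\Vert$ pointwise by $\omega$-compatibility. The idea is to trade the oscillation of $f\circ u$ on each circle $\{s\}\times\S^{1}$ for an $L^{2}$ energy density in $s$ via Cauchy--Schwarz, and then exploit the length $\sigma_u$ of the annulus.

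For each fixed $s\in[0,\sigma_u]$ the function $t\mapsto f(u(s,t))$ is $1$-periodic, so its total variation on $\S^{1}$ is at least $2\Delta_f(s)$. Using $df=\pi^{*}\alpha$ and Cauchy--Schwarz yields the per-slice estimate
\[
2\Delta_f(s)\ \leq\ \int_0^1 |\alpha(\partial_t u)|\,dt\ \leq\ \Vert\alpha\Vert_\infty\Bigl(\int_0^1 \Vert\partial_t u\Vert^{2}\,dt\Bigr)^{1/2}.
\]
Squaring, integrating over $s\in[0,\sigma_u]$, and using $\Vert\partial_t u\Vert=\Vert\partial_s u\Vert$ gives
\[
4\int_0^{\sigma_u}\Delta_f(s)^{2}\,ds\ \leq\ \Vert\alpha\Vert_\infty^{2}\,E(u).
\]
Since the minimum is bounded above by the integral average, and by definition of $\sigma_u$ we have $E(u)=4\Vert X\Vert_\infty^{2}(\sigma_u+1)$, it follows that
\[
\min_{s\in[0,\sigma_u]}\Delta_f(s)\ \leq\ \Vert\alpha\Vert_\infty\Vert X\Vert_\infty\sqrt{1+\tfrac{1}{\sigma_u}},
\]
which is uniformly bounded as soon as $\sigma_u\geq 1$ (say by $\sqrt{2}\Vert\alpha\Vert_\infty\Vert X\Vert_\infty$).

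It remains to handle the regime $\sigma_u<1$, where $E(u)<8\Vert X\Vert_\infty^{2}$ is already uniformly bounded but the integration argument degenerates. Here the annulus $[0,\sigma_u]\times\S^{1}$ is a short $J$-holomorphic cylinder of bounded energy, whose boundary loop at $s=0$ has $f$-average controlled by \eqref{eq:EnergyEstimateOnTransitionAnnulus} together with $\Vert H-f\Vert_\infty\leq K$. Standard a priori $C^{0}$ estimates for $J$-holomorphic maps (Gromov's monotonicity / Schwarz lemma) then bound the extrinsic diameter of $u([0,\sigma_u]\times\S^{1})$ in $\Cov{M}$, hence also $\Delta_f(0)$, by a constant depending only on $(M,\omega,J,[\alpha],\phi)$. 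Taking $K_{3}$ to be the maximum of the two bounds concludes the proof. The main obstacle is precisely this low-energy regime: the clean Cauchy--Schwarz identity breaks down when $\sigma_u$ is not bounded below, and one has to appeal to $J$-holomorphic compactness in the non-compact cover $\Cov{M}$, whereas the large-$\sigma_u$ case is a one-line computation.
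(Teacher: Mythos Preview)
Your main argument (the case $\sigma_u\geq 1$) is correct and is genuinely more elementary than the paper's. The paper proceeds via the Gromov--Schwarz lemma: if the minimal oscillation $\Delta_f(s_0)$ were large, then on every slice $\{s\}\times\S^1$ with $s\in[0,\sigma_u]$ there is a point where $\|du\|\geq \Delta_f(s_0)/\|df\|_\infty$; choosing $r=C\|df\|_\infty/\Delta_f(s_0)$ and packing $N\approx\sigma_u/r$ disjoint $r$-discs around such points, the Schwarz lemma forces each disc to carry symplectic area $\geq a_0$, hence $E(u)\gtrsim \sigma_u\,\Delta_f(s_0)/C$, which combined with $\sigma_u\sim E(u)/\|X\|_\infty^{2}$ bounds $\Delta_f(s_0)$. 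Your direct Cauchy--Schwarz estimate on the per-slice total variation of $f\circ u$, together with the pointwise identity $\|\partial_t u\|=\|\partial_s u\|$ on the holomorphic region $s\geq 0$, bypasses the Schwarz lemma entirely and even yields the explicit constant $\sqrt{2}\,\|\alpha\|_\infty\|X\|_\infty$; what the paper's route buys in exchange is that it never uses $\|\partial_t u\|=\|\partial_s u\|$ and would survive on domains where only an energy bound (not holomorphicity) is available.

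For the regime $\sigma_u<1$ both you and the paper are informal, but your phrasing needs a small correction. Your claim that monotonicity/Schwarz ``bound the extrinsic diameter of $u([0,\sigma_u]\times\S^1)$'' is not quite right as stated: the Schwarz lemma requires a domain disc of uniform radius, and near $s=0$ the holomorphic region $[0,\infty)\times\S^1$ affords no such disc, so there is no direct gradient bound on the boundary circle. The paper instead invokes Gromov compactness: modulo deck transformations the energy-bounded part of $\M(y,\emptyset)$ projects to a compact subset of the moduli space over $M$ (finitely many orbits by non-degeneracy), and $\Delta_f$ is deck-invariant and continuous, hence bounded. That argument, or simply extending your integral estimate to the fixed interval $[0,1]\subset\{s\geq 0\}$ and noting that some $s\in[0,1]$ with $\Delta_f(s)\leq\|\alpha\|_\infty\sqrt{E(u)}$ already suffices for the downstream Lemma~\ref{lem:TransitionUpperBound}, closes the gap.
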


We will need the classical Gromov-Schwarz lemma, which is a consequence
of the monotonicity principle. The following form is again picked from
\cite[p.181]{AL1994}~:
\begin{lemma}\label{lem:GromovSchwarzLemma}
  Given an $\omega$-compatible almost complex structure $J$ that may
  depend on a parameter in the unit disc, there is a constant $C$ such
  that, for every map $u:D\to \Cov{M}$ defined on the unit disc such that
  \begin{itemize}
  \item
    $u$ is $J$-holomorphic in $D$,
  \item
    $\iint_{D}u^{*}\omega\leq a_{0}$,
  \end{itemize}
  then $\Vert d_{0}u  \Vert \leq C$.
\end{lemma}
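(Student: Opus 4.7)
The plan is a standard mean-value / $\varepsilon$-regularity argument for the energy density of a $J$-holomorphic curve, in the spirit of the proof found in \cite{AL1994}.

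First, I would establish the pointwise Bochner-type inequality: if $u : D \to \Cov{M}$ is $J$-holomorphic for a family $J$ depending smoothly on $z \in D$, then the energy density $e(z) = \tfrac{1}{2}|du(z)|^{2}$ satisfies
$$
\Delta e \;\geq\; -K\,(e^{2} + e)
$$
for a constant $K$ depending on the $C^{1}$-norm of $J$ and on the curvature and injectivity radius of $(M,\omega)$, where $\Delta$ is the flat Laplacian on the unit disc. The derivation is a routine Bochner computation: differentiating the Cauchy--Riemann equation $\partial_{s}u + J(z,u)\partial_{t}u = 0$ twice and using the Levi-Civita connection of the compatible metric, the curvature of $M$ contributes the $-Ke^{2}$ piece while the $\partial_{z}J$-terms contribute $-Ke$. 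Since $\pi : \Cov{M} \to M$ is a local isometry, $K$ is controlled by data on $M$ alone.

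Next, I would apply the classical sub-mean-value inequality for non-negative functions satisfying such a differential inequality: there exist $\epsilon_{0} > 0$ and $C' > 0$, depending only on $K$, such that any non-negative $e$ on $D$ with $\Delta e \geq -K(e^{2}+e)$ and $\iint_{D} e \leq \epsilon_{0}$ obeys $e(0) \leq C'\iint_{D} e$. This is a standard Moser iteration argument (Chapter~V of \cite{AL1994}). Choosing $a_{0}$ once and for all so that $a_{0} \leq \epsilon_{0}$, and recalling that for $J$-holomorphic curves $\iint_{D} e = \iint_{D} u^{*}\omega$, the hypothesis $\iint_{D} u^{*}\omega \leq a_{0}$ gives $|d_{0}u|^{2} = 2\,e(0) \leq 2C' a_{0}$, so one takes $C := \sqrt{2C' a_{0}}$.

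The step requiring the most care is the Bochner inequality itself when $J$ genuinely depends on the disc parameter $z$: the $\partial_{z}J$-terms produce a linear-in-$|du|$ contribution to $\Delta|du|^{2}$, forcing the extra $+Ke$ on the right (for $J$ independent of $z$ one only sees the $-Ke^{2}$ piece). An alternative strategy, reusing Lemma \ref{lem:Monotonicity} as a black box rather than elliptic estimates, would proceed by contradiction and bubbling: a sequence $u_{n}$ with $|d_{0}u_{n}| \to \infty$, after Hofer's rescaling trick, yields a non-constant $J$-holomorphic limit $v : \C \to \Cov{M}$ of energy $\leq a_{0}$, to which Lemma \ref{lem:Monotonicity} applied at a regular value $r \leq r_{0}$ of $z \mapsto d(v(z),v(0))$ would provide a uniform positive lower bound on the energy, contradicting a small enough choice of $a_{0}$. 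The direct mean-value route is however both quantitative and dodges the delicate compactness question for preimage components of unbounded bubbles, so I would favor it.
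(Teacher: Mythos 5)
The paper supplies no proof of this lemma, simply citing \cite[p.181]{AL1994}; your sketch correctly reconstructs the standard Gromov--Schwarz argument found there. The reduction to a differential inequality $\Delta e \geq -K(e^{2}+e)$ for the energy density $e=\tfrac12|du|^{2}$, followed by the Moser/mean-value estimate with $a_{0}$ taken below the small-energy threshold $\epsilon_{0}$, is precisely the classical mechanism, and you rightly flag the linear-in-$e$ correction coming from the $z$-dependence of $J$: the $\partial_{z}J$-terms contribute $|\partial_{z}J|\,|du|\,|\nabla du|$, which after Cauchy--Schwarz absorption into $\tfrac12|\nabla du|^{2}$ leaves exactly the extra $-Ke$. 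Your alternative via Hofer rescaling and Lemma \ref{lem:Monotonicity} also works (the bubble projects to $M$, so energy quantization on the non-compact cover $\Cov{M}$ is not an issue), but as you say the direct estimate is cleaner and quantitative.
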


\begin{proof}[Proof of lemma \ref{lem:TransitionHeightBound}]
  For all
  $s\in[0,\sigma_{u}]$, we have
  $$
  \max_{t\in\S^{1}}(f(u(s,t)))
  -\min_{t\in\S^{1}}(f(u(s,t)))
  \geq \Delta_{f}(s_{0}),
  $$
  so that above each $s\in[0,\sigma_{u}]$ there is a point
  $z_{s}=s+it_{s}$ such that
  $$
  \Vert du(z_{s}) \Vert
  \geq
  \frac{\Delta_{f}(s_{0})}{\Vert df \Vert_{\infty}}
  $$

  Let $r=\frac{\Vert df \Vert_{\infty}}{\Delta_{f}(s_{0})}\, C$ (where $C$ is
  the constant appearing in the Gromov-Schwarz lemma
  \ref{lem:GromovSchwarzLemma}), and consider the $r$-subdivision
  $(s_{1},\dots,s_{N})$ given by
  $$
  s_{k}=k\, r,\quad 1\leq k\leq N
  \ \text{ with }\
  N=\left\lfloor \frac{\sigma_{u}}{r}\right\rfloor
  $$
  The associated points $z_k=s_{k}+it_{s_{k}}$ are such that~:
  $$
  r\Vert du(z_{k}) \Vert\geq C
  $$
  and hence, from lemma \ref{lem:GromovSchwarzLemma}:
  $$
  \iint_{D(z_{k},r)}u^{*}\omega\geq a_{0}.
  $$

  Because the discs $D(z_{k},r)$ are all disjoint, we obtain
  \begin{align*}
    E(u)
    &\geq Na_{0}\geq (\frac{\sigma_{u}}{r}-1)a_{0}
      =
      \left(
      \frac{\sigma_{u}\, \Delta_{s_{0}}f(u)}{C\Vert df \Vert_{\infty}}
      -1
      \right)a_{0}
  \end{align*}
  Recalling that $\sigma_{u}=\frac{E(u)}{4\Vert X \Vert^{2}}$, this means
  that
  $$
  \Delta_{s_{0}}f(u)\leq \frac{4C\Vert X \Vert^{2}\Vert df \Vert_{\infty}}{a_{0}}
  \left(\frac{E(u) +a_{0}}{E(u)}\right),
  $$
  which leads to the desired result when $E(u)\geq1$. On the other hand,
  if $E(u)\leq 1$, $u$ belongs to a compact subset of the moduli space,
  on which the maximal and minimal height, and hence a fortiori
  $\Delta_{s_{0}}f(u)$, have to be bounded.  
\end{proof}

\subsubsection{Depth estimate at $+\infty$.}

Recall from \eqref{eq:lambda} the definition of $\lambda$ by the relation
$$
[\alpha]= \lambda[\alpha_{0}].
$$

\begin{lemma}\label{lem:EnergyDepthEstimate}
  There is a positive constant $A$ depending only on $M$, $\omega$, $J$
  and $[\alpha_{0}]$ (but not on $\lambda$), and a constant $B$ that may
  depend also on $\lambda$ and $\phi$, such that for all maps
  $u\in\M(y,\emptyset)$~:
  $$
  f(u(+\infty)) \leq \Action(y) - (\frac{1}{2}-A\lambda) E(u)
  + B
  $$
  In particular, for $\lambda<\frac{1}{2A}$, we have
  $$
  \lim_{E(u)\to+\infty}f(u(+\infty))=-\infty.
  $$
\end{lemma}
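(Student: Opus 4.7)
The plan is to simply combine the two preceding lemmas, namely Lemma \ref{lem:EnergyDepthEstimateForDiscs} (energy of the ``holomorphic tail'' bounds the height gain between the level $s=s_0$ and $+\infty$) and Lemma \ref{lem:TransitionUpperBound} (some intermediate level $s_0\in[0,\sigma_u]$ has small maximum height). The key point is that the prefactor $1/\lambda$ in the monotonicity-type estimate of Lemma \ref{lem:EnergyDepthEstimateForDiscs} only spoils the coefficient in front of $E(u)$ by a $\lambda$-linear amount, so that when $\lambda$ is small the net coefficient stays strictly positive.

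Concretely, I would proceed as follows. First, apply Lemma \ref{lem:TransitionUpperBound} to get a value $s_0\geq 0$ such that
$$
\max_{t\in\S^1} f(u(s_0,t))\leq \Action(y)-\tfrac{E(u)}{2}+K_3.
$$
Next, rearrange the conclusion of Lemma \ref{lem:EnergyDepthEstimateForDiscs} applied to this same $s_0$ to obtain
$$
f(u(+\infty))\leq \max_{\{s=s_0\}}f(u)+\tfrac{\lambda}{K_1}\bigl(E(u)+K_2\bigr).
$$
Substituting the first inequality into the second yields
$$
f(u(+\infty))\leq \Action(y)-\Bigl(\tfrac{1}{2}-\tfrac{\lambda}{K_1}\Bigr)E(u)+K_3+\tfrac{\lambda K_2}{K_1},
$$
so that setting $A=1/K_1$ and $B=K_3+\lambda K_2/K_1$ gives the main inequality. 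Note that $K_1$ depends only on $M$, $\omega$, $J$ and the choice of slice data associated to $[\alpha_0]$, so $A$ is indeed independent of $\lambda$; the constants $K_2,K_3$ may absorb $\lambda$- and $\phi$-dependence into $B$.

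For the ``in particular'' statement, observe that when $\lambda<\frac{1}{2A}$ the coefficient $\frac{1}{2}-A\lambda$ is strictly positive. Since $\Action(y)$ and $B$ are fixed once $y$ and $\lambda$ are fixed, letting $E(u)\to+\infty$ forces $f(u(+\infty))\to-\infty$.

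The main (and only) obstacle is bookkeeping: one has to check that the constant $K_1$ produced by the monotonicity argument in Lemma \ref{lem:EnergyDepthEstimateForDiscs} genuinely does not depend on $\lambda$, so that the critical threshold $\lambda<\frac{1}{2A}$ is a meaningful smallness condition on the flux. Once this is granted, the lemma follows purely by substitution, with no further geometric input.
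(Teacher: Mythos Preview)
Your proof is correct and follows essentially the same approach as the paper: both apply Lemma~\ref{lem:TransitionUpperBound} to obtain an $s_{0}$ with controlled maximal height, then substitute into the rearranged inequality from Lemma~\ref{lem:EnergyDepthEstimateForDiscs}, and finally observe that $K_{1}$ is independent of $\lambda$ so that $A=1/K_{1}$ has the required dependence. Your explicit identification of $A$ and $B$ and the discussion of the ``in particular'' clause are slightly more detailed than the paper's version, but the argument is the same.
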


\begin{proof}
  From lemma \ref{lem:TransitionUpperBound}, there is some $s_{0}$ such
  that
  $$
  \max_{t\in\S^{1}} \{f(u(s_{0},t))\}\leq \Action(y)-\frac{E(u)}{2} + K_{3}
  $$
  On the other hand, from lemma \ref{lem:EnergyDepthEstimateForDiscs}, we
  have 
  \begin{align*}
  f(u(+\infty))
  &\leq
  \frac{\lambda (E(u)+K_{2})}{K_{1}}+\max_{t\in\S^{1}}\{f(u(s_{0},t))\}\\
  &\leq
    \Action(y)-\Big(\frac{1}{2}- \frac{\lambda}{K_{1}}\Big) E(u)+ 
    \lambda \frac{K_{2}}{K_{1}} + K_{3},
  \end{align*}
  which ends the proof since $K_{1}$ is independent of $\lambda$.
  \begin{figure}
    \centering
    \includegraphics[scale=.6]{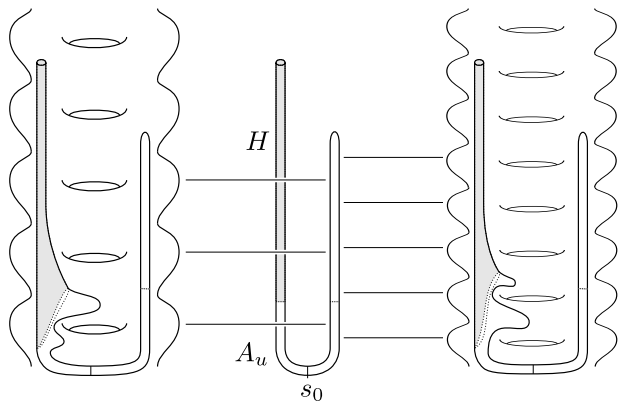}
    \caption{Holomorphic compensation of average Hamiltonian descent costs
      more energy for smaller flux.} 
    \label{fig:energydepth}
  \end{figure}
  
\end{proof}

Notice for future use that the same argument applies when the roles of the
ends in the Floer equation are flipped. We denote by $\M(\emptyset,y)$
the corresponding space. Then we have the following lemma~:

\begin{lemma}\label{lem:CoEnergyDepthEstimate}
  There is positive constant $A$ depending only on $M$, $\omega$, $J$ and
  $[\alpha_{0}]$ (but not on $\lambda$) and a constant $B$ that may depend also
  on $\lambda$ and $\phi$, such that, for all maps $u\in\M(\emptyset,y)$~:
  $$
   f(u(+\infty))\geq \Action(y) + (\frac{1}{2}-\lambda A) E(u)- B.
   $$
   In particular, for $\lambda<\frac{1}{2A}$, we have
   $$
   \lim_{E(u)\to+\infty}f(u(-\infty))=+\infty.
   $$
\end{lemma}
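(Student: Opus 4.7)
The plan is to repeat the proof of Lemma \ref{lem:EnergyDepthEstimate} essentially verbatim, with the two ends swapped, since each ingredient used (the average energy/depth identity, the modulus of continuity of $m(s)$, the Gromov--Schwarz lemma, and the monotonicity principle) is symmetric in the $s$-direction. Concretely, for $u\in\M(\emptyset,y)$ the cutoff function is chosen so that $\chi(s)=0$ for $s\leq 0$ and $\chi(s)=1$ for $s\geq 1$; then the purely $J$-holomorphic region lies in $\{s\leq 0\}$ with point limit $u(-\infty)\in\Cov{M}$, while the genuine Floer equation is in force near $+\infty$ where $u$ converges to $y$.

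First I would redo the computation \eqref{eq:Energy_DepthAverage}. Because the orbit end is now at $+\infty$, the boundary term in the integration by parts flips sign, and because $\chi'(s)\geq 0$ on the new transition region $[0,1]$, one obtains (with the consistent capping-disc convention for $y\in\CovOrbits$)
$$E(u) = -\Action(y) + \iint H_t(u(s,t))\,|\chi'(s)|\,ds\,dt.$$
Thus on the transition annulus the weighted average of $H_t(u)$ equals $E(u)+\Action(y)$. The Cauchy--Schwarz bound on $m(s_1)-m(s_0)$ derived before is invariant under $s\mapsto -s$ (it depends only on $\sqrt{E(u)}\,\Vert X\Vert_\infty$), so by the same reasoning that led to \eqref{eq:EnergyEstimateOnTransitionAnnulus} we obtain, on an annulus $A_u'=[-\sigma_u,0]\times\S^1$ lying to the left of the transition, the pointwise estimate
$$\int_0^1 H_t(u(s,t))\,dt \ \geq\ \Action(y)+\frac{E(u)}{2} \qquad \text{for all } s\in[-\sigma_u,0].$$

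Next I would apply the mirror image of Lemmas \ref{lem:TransitionUpperBound}--\ref{lem:TransitionHeightBound}. The oscillation bound in Lemma \ref{lem:TransitionHeightBound} controls $\Delta_f(s)$ purely through the size of $du$ on a long annulus via Gromov--Schwarz, so it is insensitive to the $s$-reflection and produces some $s_0\in A_u'$ with
$$\min_{t\in\S^1} f(u(s_0,t)) \ \geq\ \Action(y) + \frac{E(u)}{2} - K_3.$$
Finally, Lemma \ref{lem:EnergyDepthEstimateForDiscs} is applied to the $J$-holomorphic half-cylinder $u|_{\{s\leq s_0\}}$: its boundary circle now sits at high height while its asymptotic point $u(-\infty)$ sits at low height, but the monotonicity-principle count of slice crossings is insensitive to this reversal, so we still get
$$E(u) \ \geq\ \frac{K_1}{\lambda}\Bigl(\min_{\{s=s_0\}}f(u)\ -\ f(u(-\infty))\Bigr) - K_2.$$
Combining the last two displays yields
$$f(u(-\infty)) \ \geq\ \Action(y) + \Bigl(\frac{1}{2}-\frac{\lambda}{K_1}\Bigr)E(u) - B,$$
which is the claimed inequality (up to the apparent typo $+\infty$ versus $-\infty$ in the first display of the statement), with $A=1/K_1$.

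The only genuine point requiring care is in the first step: fixing the signs and the capping-disc convention on $\CovOrbits$ so that $\Action(y)$ enters the energy identity correctly after the end-reversal. Once this bookkeeping is done, every other part of the argument for Lemma \ref{lem:EnergyDepthEstimate} transports symmetrically in $s$ with no modification, and the constants $A$, $K_1$ remain independent of $\lambda$ for the same reasons as before.
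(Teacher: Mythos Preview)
Your proposal is correct and follows exactly the approach the paper indicates: the paper gives no separate proof for this lemma, only the remark that ``the same argument applies when the roles of the ends in the Floer equation are flipped,'' and you have carefully carried out precisely that reflection, including flagging the $+\infty$/$-\infty$ typo in the statement and the sign/capping bookkeeping in the energy identity. If anything, your write-up is more detailed than what the paper provides.
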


\section{Floer Novikov loops for small flux}

We now restrict to symplectic isotopies that have a small enough flux,
namely such that $\lambda < \frac{1}{2A}$ with the notation of lemma
\ref{lem:EnergyDepthEstimate}, so that the map
$$
\begin{array}{ccc}
\M(y,\emptyset)&\xrightarrow{f\circ\eval}& \R\\
u & \mapsto & f(u(+\infty))  
\end{array}
$$
is proper on all the moduli spaces $\M(y,\emptyset)$. From now on, the
\emph{depth} of a curve $u$ will refer to the depth $f(u(+\infty))$ of the point
$u(\infty)$.

\subsection{Definition of Floer Novikov loops}

The definition of Floer-Novikov loops mimics the definition of
Morse-Novikov loops given in \cite{NovikovPi1}. We let
$$
\M(y,\{ f\geq h\})=\{u\in\M(y,\emptyset), f(u(+\infty))\geq h\}
$$
For a generic choice of the level $h$, this is a one dimensional manifold
with boundary, and its boundary is given either by Floer breaks or by the
condition $f(u(+\infty))=h$.

\begin{definition}
  A Floer-Novikov step relative to $h$ is a connected component of a $1$
  dimensional moduli space $\M(y,\{f\geq h \})$ with non empty boundary,
  endowed with an orientation.
\end{definition}

\begin{remark}
  In this definition all the components of $\M(y,\{f\geq h\})$ are
  considered separately. Another choice would be to concatenate all such
  components that belong to the same component of $\M(y,\emptyset)$. We
  will see below that the two choices eventually lead to the same group.  
\end{remark}
\begin{remark}
  This definition obviously depends on the choice of the function $f$
  used to measure the ``depth'', but it will be rather obvious that the
  resulting defintion of the Floer Novikov fundamental group will not.
\end{remark}

According to its orientation, a step $\sigma$ has a starting and an
ending level, which is either $h$ or $f(v(+\infty))$ if the corresponding
end is a broken configuration $(u,v)\in\M(y,x)\times\M(x,\emptyset)$.

It also has a highest level which is the highest depth $f(u(+\infty))$ over all
curves $u$ in the step.

\begin{definition}
  Fix a level $h\in\R$. Two Floer Novikov steps $\sigma_{1}$ and
  $\sigma_{2}$ are said to be consecutive if either~:
  \begin{itemize}
  \item
    $\sigma_{1}$ ends and $\sigma_{2}$ starts on the level $h$,
  \item
    or $\sigma_{1}$ ends and $\sigma_{2}$ starts with broken
    configurations that involve the same orbit $x$ and the same curve
    $v\in\M(x,\emptyset)$.
  \end{itemize}
  
  A Floer-Novikov loop relative to $h$ is then a sequence of consecutive
  steps, the first starting and the last ending on the level $h$.
\end{definition}

\begin{figure}
  \centering
  \includegraphics[scale=.5]{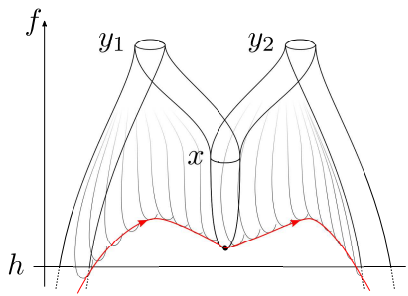}
  \caption{A Floer-Novikov loop relative to level $h$.}
  \label{fig:loop}
\end{figure}

The obvious concatenation rule and the equivalence relation $\sim$
induced by cancellation of the occurrence of two consecutive copies of the
same step with opposite orientations turns the collection of all loops
relative to $h$ into a group, that will be denoted by
$$
\zipped{\Loops(\Cov{M},J,\phi)}{h}.
$$

Moreover, given two levels $h'<h$, loops relative to $h'$ are a fortiori
loops relative to $h$, and there is a natural restriction map
$$
\zipped{\Loops(\Cov{M},J,\phi)}{h'} \xrightarrow{\zip{h'}{h}}
\zipped{\Loops(\Cov{M},J,\phi)}{h}.
$$
Finally, given three levels $h''<h'<h$, we have $
\zip{h'}{h}\circ\zip{h''}{h'}=\zip{h''}{h}$.

\begin{definition}
  Define the group of Floer Novikov loops as
  $$
  \Loops(\Cov{M},J,\phi) = \varprojlim 
  \zipped{\Loops(\Cov{M},J,\phi)}{h}.
  $$
\end{definition}

For convenience, we may omit the dependency on $\Cov{M}$ and $J$ in the
notation.

\begin{remark}
  Since the difference between two choices of height functions is always
  bounded, it is not hard to see that the direct limit process discards
  all dependency on this choice. 
\end{remark}

\subsubsection{Full Floer Novikov steps}

A component $\sigma$ of $\M(y,\emptyset)$, when restricted above a level
$h$, defines a sequence of components in $\M(y,\{f\geq h\})$, i.e. a
sequence of steps, that are obviously consecutive. We call this
concatenation of all the Floer steps that come from the same component of
 $\M(y,\emptyset)$ a \emph{full} Floer Novikov step.

\begin{definition}
  A full Floer step above a given level $h$ is the concatenation of all
  the Floer steps relative to $h$ that belong to the same component
  $\sigma\subset\M(y,\emptyset)$, in the order given by this component.

  We denote by $\zipped{\FullLoops(\phi)}{h}$ the associated space of loops
  (i.e. sequences of consecutive full steps, the first starting and the
  last ending on level $h$) and let
  $$
  \FullLoops(\phi) =
  \varprojlim_{h} \zipped{\FullLoops(\phi)}{h}.
  $$
\end{definition}

Notice that $\zipped{\FullLoops{\phi}}{h}$ is a subgroup of
$\zipped{\Loops{\phi}}{h}$, and the restriction maps induce an inclusion
in the limit.
$$
\FullLoops{\phi}\hookrightarrow\Loops{\phi}.
$$

\begin{proposition}\label{prop:RegularVersusFullLoops}
  The loops groups generated by full or regular Floer steps are the same~:
  $$
  \FullLoops{\phi} = \Loops{\phi}.
  $$
  In other words, using the terminology of \cite{NovikovPi1}, the collection of
  all the components of all the moduli spaces $\M(y,\emptyset)$ generate
  $\Loops(\phi)$ up to deck transformations and completion. 
\end{proposition}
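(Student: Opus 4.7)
The inclusion $\FullLoops(\phi)\hookrightarrow\Loops(\phi)$ is already noted just before the proposition, so the content is the reverse inclusion. My plan is to take an arbitrary element $(L_h)_h\in\varprojlim_h\zipped{\Loops(\phi)}{h}$ and show that at every level $h$ the class $L_h$ already belongs to the subgroup $\zipped{\FullLoops(\phi)}{h}$; coherence of the family then forces $(L_h)_h\in\varprojlim_h\zipped{\FullLoops(\phi)}{h}=\FullLoops(\phi)$. The key engine is to exploit the coherence relation $L_h=\zip{h^{**}}{h}(L_{h^{**}})$ for an auxiliary level $h^{**}<h$ chosen deep enough to ``unfold'' the dips of the components involved.

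First I fix $h$ together with a representative of $L_h$; since it is a finite concatenation of regular steps, it involves only finitely many connected components $\sigma_{1},\dots,\sigma_{N}\subset\M(y,\emptyset)$. By Lemma~\ref{lem:EnergyDepthEstimate}, on each $\sigma_{i}$ the height function $f\circ\eval$ tends to $-\infty$ along any unbounded end, so $\sigma_{i}\cap\{f\geq h\}$ is compact, decomposes into finitely many above-$h$ arcs, and is separated by finitely many ``dips'' of $f$ with well-defined minima. I then select $h^{**}<h$ strictly below every one of these finitely many dip-minima for $\sigma_{1},\dots,\sigma_{N}$. With this choice, every above-$h$ arc of $\sigma_{i}$ sits inside a single connected component $A_{i}$ of $\sigma_{i}\cap\{f\geq h^{**}\}$; and the restriction of $A_{i}$ to $\{f\geq h\}$, traversed in the order along $\sigma_{i}$, is exactly the full Floer step at level $h$ attached to $\sigma_{i}$.

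With this preparation I analyze $L_{h}=\zip{h^{**}}{h}(L_{h^{**}})$ term by term. A regular step of $L_{h^{**}}$ that coincides (up to orientation) with some $A_{i}$ restricts to the full step of $\sigma_{i}$ at level $h$. A step lying in a different above-$h^{**}$ arc of $\sigma_{i}$ contains, by the choice of $h^{**}$, no above-$h$ arc and hence restricts trivially in $\zipped{\Loops(\phi)}{h}$. Steps supported on a component $\sigma\notin\{\sigma_{1},\dots,\sigma_{N}\}$ have, by definition, no above-$h$ arcs in the chosen representative of $L_{h}$, so their net restricted contribution must vanish; whenever they are not individually trivial one enlarges $\{\sigma_{i}\}$ to absorb these components and re-picks a still-lower $h^{**}$, which terminates since any single $L_{h^{**}}$ has only finitely many terms. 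After discarding trivial contributions the restriction is a concatenation of full steps in a valid consecutive arrangement, which shows $L_{h}\in\zipped{\FullLoops(\phi)}{h}$.

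The main obstacle is this last point: at a fixed level $h$ one does \emph{not} have $\zipped{\FullLoops(\phi)}{h}=\zipped{\Loops(\phi)}{h}$, since an isolated above-$h$ arc $\sigma_{j}$ of a full step $\tau=\sigma_{1}\cdots\sigma_{k}$ is not a product of full steps in $\zipped{\Loops(\phi)}{h}$. So the equality can only hold after passing to the projective limit, and the crux is to convert the hypothetical ``outside'' restricted arcs into cancellations by the generating relation of consecutive inverse steps. This is where the coherence forced by arbitrarily low $h^{**}$ does the work, because any non-full residue would survive at lower levels and contradict the assumption that $(L_{h})_{h}$ comes from a coherent family; making this bookkeeping precise is the technical heart of the argument.
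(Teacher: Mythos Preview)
Your strategy is essentially the paper's: descend to a deeper level, use properness of $f\circ\eval$ to absorb the ``dips,'' and read off that restrictions from the deeper level are full steps. The difference is packaging. The paper isolates the dip control as a separate uniform statement (Lemma~\ref{lem:Icebergs}): for every $\Delta^{+}>0$ there exists $\Delta^{-}>0$ such that for \emph{every} index-$1$ orbit $y$ with $\Action(y)\leq h+\Delta^{+}$ and every $h'\leq h-\Delta^{-}$, all the above-$h$ arcs of a given component of $\M(y,\emptyset)$ already lie in a single component of $\M(y,\{f\geq h'\})$. The point is that $\Delta^{-}$ depends only on the action bound $\Delta^{+}$, not on any finite list of components; this is exactly what your ad-hoc choice of $h^{**}$ (picked below the dips of the particular $\sigma_{1},\dots,\sigma_{N}$) does not give you, and it is why you are pushed into the ``enlarge $\{\sigma_{i}\}$ and re-pick a still lower $h^{**}$'' iteration that you then cannot terminate.

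With the uniform lemma the paper's argument is short: take the reduced word $(\sigma_{1},\dots,\sigma_{k})$ of $\gamma$ at level $h$, set $\Delta^{+}=\Action_{\max}-h$ with $\Action_{\max}=\max_{i}\Action(y_{i})$, let $h'=h-\Delta^{-}$, and use $\zip{-\infty}{h}(\gamma)=\zip{h'}{h}\big(\zip{-\infty}{h'}(\gamma)\big)$. By the lemma, for any orbit $y$ with $\Action(y)\leq\Action_{\max}$ a component of $\M(y,\{f\geq h'\})$ restricts above $h$ to either the empty word or a full step; since the $\sigma_{i}$ sit inside such components, the reduced word at level $h$ is expressed through full steps. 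Your final paragraph correctly flags that the delicate point is the contribution of components not among the $\sigma_{i}$; the paper's manoeuvre to avoid your open-ended iteration is precisely to trade the finite list of components for a single action bound and invoke uniformity. If you rewrite your argument with that uniform lemma in hand (prove it first by the same properness/compactness you already use for the dips, plus deck-translation to reduce to finitely many orbits), the bookkeeping you worry about disappears and the proof closes in the same way as the paper's.
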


This is a consequence of lemma \ref{lem:Icebergs} below, which itself is
a direct consequence of the properness of the map $u\mapsto
f(u(+\infty))$.

\begin{lemma}\label{lem:Icebergs}
  For every $\Delta^{+}>0$, there is a constant $\Delta^{-}>0$ such that
  for every levels $h$ and $h'$  with $h'\leq h-\Delta^{-}$, and every
  index $1$ orbit $y$ with $\Action(y)\leq h+\Delta^{+}$, two components
  of $\M(y,\{f\geq h\})$ that belong to the same component of
  $\M(y,\emptyset)$ belong to the same component of $\M(y,\{f\geq h'\})$. 
\end{lemma}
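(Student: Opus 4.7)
The plan is to derive the lemma as a direct consequence of Lemma~\ref{lem:EnergyDepthEstimate} combined with Floer--Gromov compactness. First, given $\Delta^+$, for any $\Delta^-\geq 0$ and any $u\in\M(y,\emptyset)$ with $\Action(y)\leq h+\Delta^+$ and $f(u(+\infty))\geq h-\Delta^-$, the depth estimate yields the uniform energy bound
$$
E(u)\ \leq\ \frac{\Delta^+ + \Delta^- + B}{\tfrac12 - A\lambda}.
$$
Since the action is also controlled and breaking/bubbling is absorbed by the compactification of bounded ends, $\M(y,\{f\geq h-\Delta^-\})$ is a compact $1$-manifold with boundary, the boundary coming from Floer breaks at index-$0$ orbits and from the level set $\{f=h-\Delta^-\}$. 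In particular it has only finitely many components.

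Next, fix a component $\Sigma$ of $\M(y,\emptyset)$ containing the two given components $C_1,C_2$ of $\M(y,\{f\geq h\})$. Since $\Sigma$ is a connected $1$-manifold, there is a compact sub-arc $\gamma\subset\Sigma$ whose endpoints lie in $\partial C_1$ and $\partial C_2$ and whose interior is contained in $\Sigma\cap\{f<h\}$. The continuous function $f\circ\eval$ attains a finite minimum $m_\gamma$ on the compact arc $\gamma$. As soon as $h-\Delta^-\leq m_\gamma$, the entire arc $\gamma$ is contained in $\Sigma\cap\{f\geq h-\Delta^-\}$, which then provides the required path joining $C_1$ to $C_2$ inside $\M(y,\{f\geq h-\Delta^-\})$.

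The remaining --- and main --- obstacle is to bound $h-m_\gamma$ \emph{uniformly} in $(y,h,\Sigma,\gamma)$, since $\Delta^-$ must depend only on $\Delta^+$. Here the depth estimate is used again, this time to constrain the action from below: any $u\in\M(y,\{f\geq h-\Delta^-\})$ with $\Action(y)\leq h+\Delta^+$ satisfies $\Action(y)\geq h-\Delta^--B$, so the relevant orbits $y$ have actions in a window of width $\Delta^++\Delta^-+B$. Since deck transformations of $\Cov{M}$ shift the action by periods of $[\alpha]$, only finitely many orbits $y$ are relevant modulo $\Deck$. Combining this with the deck-equivariant compactness of $\bigsqcup_y\M(y,\{f\geq h-\Delta^-\})$, one concludes that only finitely many topological types of components $\Sigma$ occur, each carrying only finitely many critical values of $f\circ\eval|_\Sigma$. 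The deepest such merging level across all the finitely many types furnishes the required uniform $\Delta^-$.

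The delicate step is thus this last passage from the pointwise depth estimate to a uniform finiteness statement for the Morse-type data of $f\circ\eval|_\Sigma$ across all orbits $y$; the cocompactness of $\Cov{M}\to M$ and the action-window bound together with Gromov compactness should provide it, but writing it out carefully is the non-trivial part of the proof.
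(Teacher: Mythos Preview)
Your approach uses the same key ingredient as the paper --- the properness of $u\mapsto f(u(+\infty))$ coming from Lemma~\ref{lem:EnergyDepthEstimate} --- but you try to build $\Delta^{-}$ directly, whereas the paper argues by contradiction, and this difference matters.

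Your direct argument, as written, is circular. You invoke the action window $\Action(y)\in[h-\Delta^{-}-B,\,h+\Delta^{+}]$ and the ``deck-equivariant compactness of $\bigsqcup_{y}\M(y,\{f\geq h-\Delta^{-}\})$'' to produce finitely many relevant orbits and components, and then read off $\Delta^{-}$ from this finite list. But both the window and the compact set already depend on $\Delta^{-}$. The fix is easy but you did not make it: since the hypothesis gives non-empty $\M(y,\{f\geq h\})$, any $u$ in it satisfies $h\leq f(u(+\infty))\leq\Action(y)+B$, so $\Action(y)\in[h-B,\,h+\Delta^{+}]$, a window depending only on $\Delta^{+}$. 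From there, deck shifts reduce to finitely many $y$ and a bounded range of $h$, and for each such $y$ the compact set $\M(y,\{f\geq h\})$ meets only finitely many components $\Sigma$ of $\M(y,\emptyset)$; the convex hull (in the $1$-manifold sense) of each such intersection is a compact arc on which $f\circ\eval$ has a finite minimum. Taking the worst of these finitely many minima gives $\Delta^{-}$. Your talk of ``topological types of components'' and ``critical values of $f\circ\eval|_{\Sigma}$'' is unnecessary and obscures this.

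The paper sidesteps all of this by contradiction: if no $\Delta^{-}$ works, take sequences $(y_{n},h_{n},h'_{n})$ with $h_{n}-h'_{n}\to\infty$ and witnesses $u_{n,1},u_{n,2}\in\M(y_{n},\{f\geq h_{n}\})$ lying in the same component of $\M(y_{n},\emptyset)$ but different components of $\M(y_{n},\{f\geq h'_{n}\})$. Deck shifts and compactness make $y_{n}=y$, $h_{n}=h$, $u_{n,i}=u_{i}$ constant. Then the fixed compact arc $[u_{1},u_{2}]\subset\M(y,\emptyset)$ must contain points $v_{n}$ with $f(v_{n}(+\infty))<h'_{n}\to-\infty$, contradicting compactness. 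This is shorter and avoids the bootstrapping you flagged as ``the non-trivial part''.
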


\begin{figure}
  \centering
  \includegraphics[scale=.8]{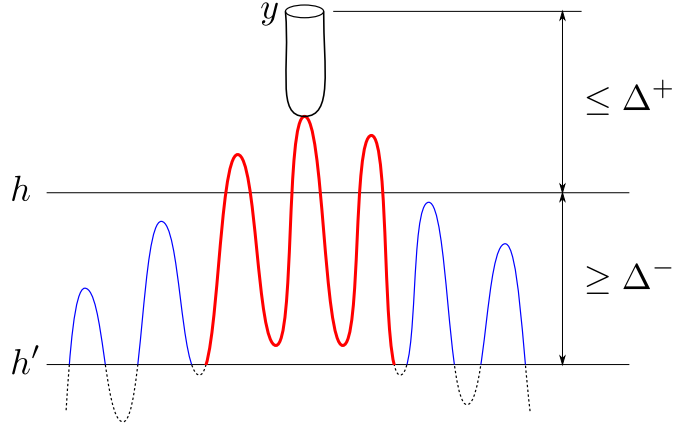}
  \caption{The traces of a component of $\M(y,\emptyset)$ above a level $h$
    eventually join as a single component above sufficiently deep levels $h'$.} 
  \label{fig:Icebergs}
\end{figure}

\begin{proof}
  If this is not the case, we find a constant $\Delta^{+}$ and a sequence
  $h_{n}$, $h'_{n}$, $y_{n}$ such that
  \begin{enumerate}
  \item
    $\Action(y_{n})\leq h_{n}+\Delta^{+}$,
  \item
    $h'_{n}<h_{n}-n$,
  \end{enumerate}
  and two disjoint components $\sigma'_{n,1}$ and $\sigma'_{n,2}$ of
  $\M(y,\{f\geq h'_{n}\})$ that belong to the same component of
  $\M(y,\emptyset)$ such that $\sigma'_{n,i}\cap\M(y,\{f\geq h_{n}\})\neq
  \emptyset$ for $i=1,2$.
  
  Up to a sub-sequence and shifts in $\Cov{M}$, the orbit $y_{n}$ can be
  supposed to be in fact a constant orbit $y$. The sequence $h_{n}$ is
  then bounded from below, and can also be supposed to be constant
  without loss of generality.

  For $i=1,2$, pick some $u_{n,i}\in\sigma'_{n,i}\cap\M(y,\{f\geq
  h_{n}\})$.   Since $\M^{\geq h}(y,\emptyset)$ is compact, both sequences
  $(u_{n,1})$ and $(u_{n,2})$ and can be supposed to converge, and hence
  to be constant.
  
  Since $u_{1}$ and $u_{2}$ belong to the same component of
  $\M(y,\emptyset)$ which is $1$ dimensional, they bound a compact
  segment $[u_{1},u_{2}]$ in $\M(y,\emptyset)$. On the other hand, since
  this segment cannot is not fully contained in $\M(y,\{f\geq h'_{n}\})$
  by assumption, there is a point $v_{n}$ between $u_{1}$ and $u_{2}$
  such that $f(v_{n}(+\infty))< h'_{n}$. In particular
  $$
  \lim f(v_{n}(+\infty))=-\infty,
  $$
  which contradicts the compactness of $[u_{1},u_{2}]$.
\end{proof}

\begin{proof}[Proof of proposition \ref{prop:RegularVersusFullLoops}]
  Pick an element $\gamma\in\Loops(\phi)$, and a level $h$. Consider the
  reduced word representing $\zip{-\infty}{h}(\gamma)$~: it is a finite
  sequence $(\sigma_{1},\dots,\sigma_{k})$ of components of moduli spaces
  $\M(y_{i},\{f\geq h\})$.

  Let $\Action_{\max}=\max\{\Action(y_{i})\}$, and consider the level
  $h'=h-\Delta^{-}$ where $\Delta^{-}$ is the constant provided by lemma
  \ref{lem:Icebergs} when taking $\Delta^{+}=\Action_{\max}-h$.

  Then $\zip{-\infty}{h}(\gamma)=\zip{h'}{h}(\zip{-\infty}{h'}(\gamma))$.
  This means that the components $\sigma_{i}$ are restriction above level
  $h$ of components of $\M(y,\{f\geq h'\})$~: from lemma
  \ref{lem:Icebergs}, this means they are in fact full Floer steps.  
\end{proof}

\subsection{Evaluation}

Notice that the evaluation at $+\infty$, denoted as
$$
\M(y,\emptyset) \xrightarrow{\eval} \Cov{M},
$$
continuously extends to the broken configurations. It turns each
component of $\M(y,\emptyset)$ into a path in $\Cov{M}$ that is well
defined up to parameterization, and for which unbounded ends go to
$-\infty$ in $\Cov{M}$. 

Focusing on subs-levels and passing to homotopy classes, we get rid of
the parameterization ambiguity and obtain a well defined maps
$$
\zipped{\Loops(\Cov{M},J,\phi)}{h}\xrightarrow{\eval}
\zipped{\pi_{1}(\Cov{M},\alpha)}{h}
$$
for every levels $h\in\R$. For $h'<h$ they make the following
diagram commutative~:
$$
\xymatrix{
  \zipped{\Loops(\Cov{M},J,\phi)}{h}\ar[r]^{\eval}&
  \zipped{\pi_{1}(\Cov{M},\alpha)}{h}\\
  \zipped{\Loops(\Cov{M},J,\phi)}{h'}\ar[r]^{\eval}\ar[u]^{\zip{h'}{h}}&
  \zipped{\pi_{1}(\Cov{M},\alpha)}{h'}\ar[u]^{\zip{h'}{h}}
}
$$
In particular, these evaluation maps induce a map in the limit~:
\begin{equation}
  \label{eq:evaluation}
  \Loops(\Cov{M},J,\phi)
  \xrightarrow{\eval}
  \pi_{1}(\Cov{M},\alpha).
\end{equation}

The main result in this paper is the following:
\begin{theorem}
  Consider a non degenerate symplectic isotopy $\phi$ in $M$ and equip
  $M$ with a generic almost complex structure $J\in\J$. If the flux of $\phi$ is small enough, then the evaluation map
  $$
  \Loops(\Cov{M},J,\phi)
  \xrightarrow{\eval}
  \pi_{1}(\Cov{M},\alpha)
  $$
  is onto.
\end{theorem}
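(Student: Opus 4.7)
The plan is to reduce the surjectivity statement to a level-wise problem, to realize each class of $\zipped{\pi_{1}(\Cov{M},\alpha)}{h}$ using a PSS-type continuation between Morse and Floer moduli, and to use the energy/depth estimates from Section \ref{sec:EnergyDepthEstimates} to keep the construction inside controlled sublevels of $f$.

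First I would reduce to level-wise surjectivity. Both source and target of the evaluation map are, by construction, inverse limits over the depth filtration, so it is enough to prove that for every $h$ and every class $\gamma \in \zipped{\pi_{1}(\Cov{M},\alpha)}{h}$ there is some $h' \leq h$ and a Floer Novikov loop $L \in \zipped{\Loops(\phi)}{h'}$ whose image under $\eval$ in $\zipped{\pi_{1}(\Cov{M},\alpha)}{h}$ equals $\gamma$. A standard diagonal argument on the inverse system, using compatibility with the restriction maps $\zip{h'}{h}$, then produces an element of $\Loops(\phi)$ mapping to $\gamma$. Moreover, Lemma \ref{lem:Icebergs} and Proposition \ref{prop:RegularVersusFullLoops} allow us to work with either regular or full Floer steps in the level-wise construction, as convenient.

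Second, for the level-wise realization I would fix a Morse function $g$ on $\Cov{M}$ compatible with $[\alpha]$ and use the Morse-Novikov description of \cite{NovikovPi1} to decompose $\gamma$ as a concatenation of descending arcs from index $1$ critical points of $g$. A standard PSS continuation, obtained by interpolating the cutoff function $\chi(s)$ and the almost complex structure $J$, identifies each such Morse arc with the evaluation $\eval$ of a $1$-dimensional component of $\M(y,\emptyset)$ at some index-$1$ orbit $y$; this is essentially the content of the Hamiltonian analog \cite{FloerPi1}, now applied within a single sublevel. Gluing these identifications in the order prescribed by $\gamma$, and matching broken configurations through index-$0$ orbits with the consecutive-step condition, assembles the required loop $L$.

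The main obstacle is ensuring that the whole construction respects the depth filtration, i.e.\ that the PSS continuation does not force the evaluation to rise out of $\{f \geq h'\}$ for some $h'$ chosen uniformly in $\gamma$. This is precisely where the small-flux assumption $\lambda < 1/(2A)$ is needed: Lemma \ref{lem:EnergyDepthEstimate} guarantees that $f\circ\eval$ is proper on each $\M(y,\emptyset)$, so that at each fixed level only finitely many moduli components (up to deck transformations) can contribute. The remaining technical point is to check that the parametrized moduli used in the PSS continuation satisfy an analogous energy/depth estimate. Since the derivations of Lemmas \ref{lem:EnergyDepthEstimateForDiscs} and \ref{lem:TransitionHeightBound} used only the monotonicity principle and the Gromov-Schwarz lemma, both of which extend routinely to parametrized families of compatible almost complex structures, this should go through, but obtaining the uniform control on the cutoff data along the continuation is the delicate part of the argument.
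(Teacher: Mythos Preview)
Your overall strategy matches the paper's: factor the Morse--Novikov evaluation of \cite{NovikovPi1} through $\Loops(\phi)$ by means of PSS-type hybrid moduli spaces, with an energy/depth estimate on those hybrids playing the role of Lemma~\ref{lem:EnergyDepthEstimate}. Two points, however, deserve correction.

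First, your reduction step is shakier than you suggest. Level-wise surjectivity of a map of inverse systems does not in general imply surjectivity of the limit map, and the ``standard diagonal argument'' you invoke would need a Mittag--Leffler type input you have not supplied. The paper sidesteps this entirely: it builds, for each level $h$, a group morphism $\psi_{h}:\zipped{\Loops(f)}{h}\to\zipped{\Loops(\phi)}{h}$ commuting with both evaluations and with the restriction maps $\zip{h'}{h}$, so that one obtains a morphism $\psi$ on the limits making $\eval\circ\psi=\eval$. Surjectivity of $\Loops(\phi)\to\pi_{1}(\Cov{M},\alpha)$ then follows immediately from the known surjectivity of $\Loops(f)\to\pi_{1}(\Cov{M},\alpha)$, with no diagonal argument needed.

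Second, your description of the PSS step is too optimistic: a single Morse arc $W^{u}(b)\cap\{f\geq h\}$ is not identified with a single component of some $\M(y,\emptyset)$, but with an entire Floer--Novikov \emph{loop}, i.e.\ a finite sequence of consecutive Floer steps through possibly many different orbits $y_{i}$. Concretely, the paper uses the doubly truncated equation~\eqref{eq:saucisse_equation} to form the two--dimensional hybrid space $\M(b,\{f\geq h\})=\{(u,R):u(-\infty)\in W^{u}(b),\ f(u(+\infty))\geq h\}$, and then runs the ``crocodile walk'' of \cite{FloerPi1} along the boundary component containing the $R=0$ stratum $W^{u}(b)\cap\{f\geq h\}$. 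The alternating upper/lower gluings produce the sequence of Floer steps, and the family of preferred paths $\gamma_{u}$ from $b$ to $\eval(u)$ sweeps out the disc exhibiting the homotopy. The energy/depth estimate you correctly anticipate is Lemma~\ref{lem:EnergyDDepthEstimate} for $\M(\emptyset,\emptyset)$, and it is indeed obtained by the same monotonicity and Gromov--Schwarz arguments.
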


The proof of this theorem reduces to prove that any Morse Novikov loop is
homotopic to Floer Novikov loop, which is the object of the next section.

\section{From Morse Novikov to Floer Novikov loops}

In this section, we suppose that the $1$-form picked in the cohomology
class $[\alpha]$ to define the depth function $f$ is Morse, so that the
function $f$ itself is Morse on $\Cov{M}$. Moreover, as $[\alpha]\neq0$, we
can also suppose for convenience that $f$ has no index $0$ critical point.

We also pick a Riemannian metric $<,>$ on $M$, that we lift to $\Cov{M}$, such that
the pair $(f,<,>)$ is Morse Smale on $\Cov{M}$. In this situation, the
unstable manifold of a critical point $b$ of index $1$ of $f$ is a path
$\gamma_{b}$ going to $-\infty$ on both ends in $\Cov{M}$. We call the
restriction of such a path above a level $h$ a Morse-Novikov step
relative to $h$, and define the space of Morse-Novikov $\Loops(f)$ in the
same way as before. In this simplified situation, this sums up to
letting~: $\Loops(f) = \varprojlim_{h}\zipped{\Loops(f)}{h}$ where
$$
\zipped{\Loops(f)}{h}=<b\in\Crit_{1}(f)\ | \ b=1 \text{ if }f(b)\leq h >
$$
is the group freely generated by the index $1$ critical points $b$ of $f$
where  $f(b)\geq h$.

Finally, recall from \cite{NovikovPi1} that the natural evaluation map to the
Novikov fundamental group
$$
\Loops(f) \xrightarrow{\eval}\pi_{1}(\Cov{M},[\alpha])
$$
is surjective.

\bigskip

The object of this section is to prove the following proposition~:
\begin{proposition}\label{prop:psi}
  The above evaluation map factors through $\Loops(\phi)$, i.e. there is
  a group morphism $\psi$ making the following diagram commutative~:
  $$
  \xymatrix{
    \Loops(f) \ar[d]_-{\psi}\ar@{>>}[r]^-{\eval} & \pi_{1}(\Cov{M},\Xi)\\
    \Loops(\phi)\ar[ur]_-{\eval} 
  }
  $$
  In particular, the evaluation $\Loops(\phi)\to\pi_{1}(\Cov{M},\Xi)$ is
  surjective.
\end{proposition}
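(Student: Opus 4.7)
My plan is to construct $\psi$ generator-by-generator on the free groups $\zipped{\Loops(f)}{h}$ via a hybrid PSS--Morse moduli space, then pass to the inverse limit. Fix a generic level $h$ and an index $1$ critical point $b$ of $f$ with $f(b) > h$. Its Morse step is the compact arc $W^{u}(b) \cap \{f \geq h\}$ with endpoints on $\{f=h\}$, and the task is to produce a Floer--Novikov loop $\psi^{h}(b) \in \zipped{\Loops(\phi)}{h}$ whose evaluation is homotopic to this arc (rel.\ the level set).

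The main building block is the constrained moduli space $\mathcal{N}(b,y) = \{u \in \M(y,\emptyset) : u(+\infty) \in W^{s}(b)\}$, of expected dimension $|y|-1$. For $|y|=1$ it is $0$-dimensional, and by the properness of $f \circ \eval$ established in Lemma~\ref{lem:EnergyDepthEstimate}, its points lying above level $h$ form a finite set. Each rigid $u \in \mathcal{N}(b,y)$ sits in a unique $1$-dimensional component of $\M(y,\emptyset)$, which restricts above the level $h$ to a Floer step $\sigma_{u}$. With orientations dictated by the signs of the transverse intersection of $\eval(\sigma_{u})$ with the cooriented hypersurface $W^{s}(b)$, I would assemble the $\sigma_{u}$'s into a word $\psi^{h}(b)$ in $\zipped{\Loops(\phi)}{h}$.

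To verify that this word is really a Floer--Novikov loop and that the evaluation triangle commutes, the key tool is the $1$-dimensional moduli space $\mathcal{N}(b,y)$ for $|y|=2$. Its boundary decomposes into Floer breakings $(u_{1},u_{2}) \in \M(y,x) \times \mathcal{N}(b,x)$ with $|x|=1$, and Morse events where $u(+\infty)$ crosses $\partial\overline{W^{s}(b)}$. The former give exactly the consecutiveness relation between pairs of Floer steps through a shared orbit $x$ and a shared curve $v = u_{2}$, while the latter correspond to the endpoints of the arc at level $h$ (the assumption that $f$ has no index~$0$ critical points rules out further Morse bifurcations at lower strata of $\overline{W^{s}(b)}$). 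The $2$-dimensional region swept by $\eval$ on $\mathcal{N}(b,y)$ is precisely the relative homotopy realizing $\eval_{\phi}(\psi^{h}(b)) \simeq W^{u}(b) \cap \{f \geq h\}$ in the $\pi_{1}$ above level $h$. Compatibility with the restriction maps $\zip{h'}{h}$, ensured by Lemma~\ref{lem:Icebergs}, lets the construction descend to the inverse limit, and surjectivity of the Floer evaluation then follows from the surjectivity of the Morse one.

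The hard part will be the boundary analysis just sketched: establishing transversality for the hybrid moduli spaces $\mathcal{N}(b,y)$ (requiring simultaneous genericity of $J$ and of the Morse--Smale pair $(f,\langle,\rangle)$), matching the Floer breakings of $\mathcal{N}(b,y)$ correctly with the consecutiveness relation of $\zipped{\Loops(\phi)}{h}$, and organizing the resulting Floer steps into a genuinely ordered word rather than an unordered multiset. The small-flux hypothesis enters here through Section~\ref{sec:EnergyDepthEstimates}: it is what guarantees that the hybrid moduli spaces remain proper above each level, so that $\psi^{h}(b)$ is a finite, well-defined word and the enumeration is stable under deepening $h$.
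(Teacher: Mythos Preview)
Your approach differs substantially from the paper's, and the step where you claim that the $1$-dimensional spaces $\mathcal{N}(b,y)$ for $|y|=2$ provide the consecutiveness relation does not work as described.

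The paper does not use the stable manifold $W^{s}(b)$ at all. Instead it introduces a \emph{doubly} truncated Floer equation (cut off at both ends, with a free neck-length parameter $R$) and the resulting $(n{+}1)$-dimensional space $\M(\emptyset,\emptyset)$. The hybrid space is $\M(b,\emptyset)=\{(u,R)\in\M(\emptyset,\emptyset):u(-\infty)\in W^{u}(b)\}$, constrained by the \emph{unstable} manifold at the $-\infty$ end; a separate energy/depth estimate (Lemma~\ref{lem:EnergyDDepthEstimate}) is needed for these doubly truncated curves, beyond the one you invoke for $\M(y,\emptyset)$. The space $\M(b,\{f\geq h\})$ is then a $2$-manifold with corners: one boundary piece ($R=0$) is literally the Morse arc $W^{u}(b)\cap\{f\geq h\}$, another ($B_{3}$) is $\bigcup_{|y|=1}\M(b,y)\times\M(y,\{f\geq h\})$, whose second factors are exactly the Floer steps. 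A ``crocodile walk'' along the boundary, alternating resolution of upper and lower degenerations as in \cite{FloerPi1}, automatically produces an \emph{ordered} sequence of consecutive steps containing the Morse arc as one segment, and the $2$-dimensional interior itself supplies the homotopy between $W^{u}(b)$ and the evaluation of the resulting Floer loop.

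In your scheme, the ends of a component of $\mathcal{N}(b,y)$ for $|y|=2$ are pairs $(u_{1},u_{2})\in\M(y,x)\times\mathcal{N}(b,x)$ with $|x|=1$. Here $u_{2}$ is a single point inside some Floer step $\sigma_{u_{2}}\subset\M(x,\{f\geq h\})$, and $u_{1}$ is a rigid Floer cylinder from an index-$2$ orbit to an index-$1$ orbit. Neither datum is a pair of consecutive Floer steps in the sense of the definition, which requires two steps in $\M(y_{i},\{f\geq h\})$ (for index-$1$ orbits $y_{i}$) sharing an endpoint on $\{f=h\}$ or a common break at an index-$0$ orbit. So this boundary does not deliver the relation you assert. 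More conceptually: cutting the augmentation moduli by the codimension-$1$ cycle $W^{s}(b)$ records, homologically, how often each step crosses that hypersurface, but it neither arranges the crossings into a word nor produces the $2$-cell realising the homotopy to $W^{u}(b)$. The ordering problem you flag at the end is not a residual technicality; it is the core of the construction, and the paper resolves it by going one dimension up with the doubly truncated equation and the unstable manifold.
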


To associate a Floer-Novikov loop to a Morse-Novikov loop, we will make
use of hybrid moduli spaces, that are built out of the space $\M(\emptyset,
\emptyset)$ of solutions, in the trivial homotopy class, of the Floer
equation in which the Hamiltonian term is truncated at both ends. More
precisely, this equation is the following~:
\begin{equation}
  \label{eq:saucisse_equation}
\frac{\partial u}{\partial s}+J_{\chi(s),t}(u)
\Big(\frac{\partial u}{\partial t}-\chi_{R}(s)X_{t}(u)\Big) = 0.
\end{equation}
where
$$
\chi_{R}(s) = \chi(s-R)\chi(-s-R)
$$
is a smooth function such that $\chi_{R}(s)=1$ for $|s|\leq R-1$ and
$\chi_{R}(s)=0$ for $|s|\geq R$. Here $R$ is a non negative number that is part
of the unknown.

A solution $(u,R)$ of this equation with finite energy has limits at both
ends ends, which are just points in $\Cov{M}$. In particular, it induces
a map from $\S^{2}$ to $\Cov{M}$. We denote by $\M(\emptyset,\emptyset)$
the space of couples $(u,R)$ satisfying \eqref{eq:EnergySaucisse} such
that
\begin{enumerate}
\item
  $u$ has finite energy,
\item
  as a map from $\S^{2}$ to $\Cov{M}$, $u$ is in the trivial homotopy
  class.
\end{enumerate}

For a generic choice of $J$ this is a smooth manifold with boundary, of
dimension $n+1$. The boundary is given by the condition $R=0$, and
consists in the constant maps $u:\R\times\S^{1}\to \Cov{M}$.

The energy sub-levels are compact up to breaks, and we still denote
the space obtained by compactifying all the energy sub-levels  by
$\M(\emptyset,\emptyset)$.

Given an index $1$ critical point $b$ of $f$, the hybrid spaces we are
interested in are the following~:
$$
\M(b,\emptyset)=\{(u,R)\in\M(\emptyset,\emptyset),u(-\infty)\in W^{u}(b)\}
$$
and
$$
\M(b,\{f\geq h\})=\{(u,R)\in
\M(b,\emptyset), f(u(+\infty))\geq h\}.
$$

To control the compactness of such spaces with respect to depth, we again
need an energy-depth estimate for them, which is the object of the next
section.

\subsection{Energy/depth  estimate on $\M(\emptyset,\emptyset)$}

The same computations as for the augmentation curves shows that for
$(u,R)\in\M(\emptyset, \emptyset)$, we have
\begin{equation}
  \label{eq:EnergySaucisse}
E(u)=\iint_{A_{-}}H_{t}(u)|\chi_{R}'(s)|dsdt
-\iint_{A_{+}}H_{t}(u)|\chi_{R}'(s)|dsdt
\end{equation}
where $A_{\pm} = [\pm (R-1),\pm R]\times\S^{1}$.

We now want to turn this average estimate into a pointwise estimate at
both ends. Recall that $\lambda$ was defined by the relation
$[\alpha]=\lambda[\alpha_{0}]$ in \eqref{eq:lambda}.

\begin{lemma}\label{lem:EnergyDDepthEstimate}
  There is a positive constant $A'$ depending only on $M$, $\omega$, $J$
  and $\Xi_{0}$ (but not on $\lambda$), and a constant $B'$ that may
  depend also on $\lambda$ and $\phi$, such that for all maps
  $u\in\M(\emptyset,\emptyset)$~:
  $$
  f(u(-\infty))-f(u(+\infty))
  \geq
  E(u) (\frac{1}{2}-\lambda A') - B'
  $$
\end{lemma}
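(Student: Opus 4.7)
The proof mirrors that of Lemma \ref{lem:EnergyDepthEstimate}, but performed symmetrically at both ends of the cylinder. The crucial observation is that we only need to estimate the \emph{difference} $f(u(-\infty))-f(u(+\infty))$, so the unknown absolute offset of the average Hamiltonian will cancel. Write $a_\pm := \iint_{A_\pm} H_t(u)\,|\chi_R'(s)|\,ds\,dt$ for the weighted averages of $H_t(u)$ over the two transition annuli, and set $T = (a_-+a_+)/2$. The energy identity \eqref{eq:EnergySaucisse} reads $a_- - a_+ = E(u)$, so that $a_- = T+E(u)/2$ and $a_+ = T-E(u)/2$; in particular there exist $s_-^\sharp \in [-R,-R+1]$ and $s_+^\sharp \in [R-1,R]$ with $m(s_-^\sharp)\geq T+E(u)/2$ and $m(s_+^\sharp)\leq T-E(u)/2$, where $m(s)=\int_0^1 H_t(u(s,t))\,dt$.

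Next I would upgrade these into pointwise-in-$s$ control of $m$ on the purely holomorphic regions $|s|\geq R$. The same Cauchy--Schwarz computation used to derive \eqref{eq:EnergyEstimateOnTransitionAnnulus} applies verbatim and gives $|m(s)-m(s')|\leq \|X\|_\infty\sqrt{|s-s'|}\sqrt{E(u)}$. Hence on an interval of length of order $\sigma\sim E(u)/\|X\|_\infty^2$ extending outward from each transition annulus, we have $m(s)\geq T+E(u)/4$ on $[-R-\sigma,-R]$ and $m(s)\leq T-E(u)/4$ on $[R,R+\sigma]$. Since $u$ is $J$-holomorphic on $\{|s|\geq R\}$, the Gromov--Schwarz argument of Lemma \ref{lem:TransitionHeightBound} applies on each of these annuli and produces levels $s_-\in[-R-\sigma,-R]$ and $s_+\in[R,R+\sigma]$ at which $\Delta_f(s_\pm)\leq K_3$. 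Combined with $\|f-H\|_\infty\leq K$ this yields $\min_t f(u(s_-,t))\geq T+E(u)/4-(K+K_3)$ and $\max_t f(u(s_+,t))\leq T-E(u)/4+(K+K_3)$.

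Finally I would transfer the estimates at $s_\pm$ to the asymptotic points $u(\pm\infty)$ via the monotonicity principle. Lemma \ref{lem:EnergyDepthEstimateForDiscs} applied to the holomorphic half-cylinder $\{s\geq R\}$ yields $f(u(+\infty))\leq \max_t f(u(s_+,t))+\lambda(E(u)+K_2)/K_1$. The analogous slice-counting argument applied to $\{s\leq -R\}$ (along the lines of the remark that follows Lemma \ref{lem:EnergyDepthEstimate}) gives $f(u(-\infty))\geq \min_t f(u(s_-,t))-\lambda(E(u)+K_2)/K_1$. Subtracting, the unknown offset $T$ drops out and one obtains $f(u(-\infty))-f(u(+\infty))\geq E(u)\bigl(1/2-2\lambda/K_1\bigr)-B'$, which proves the lemma with $A'=2/K_1$ (depending through $K_1$ only on $M,\omega,J$ and $[\alpha_0]$).

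The main conceptual obstacle is the one just highlighted: neither end of the cylinder admits an action-based reference point (there is no periodic orbit at $\pm\infty$), so one cannot directly reuse Lemmas \ref{lem:EnergyDepthEstimate} and \ref{lem:CoEnergyDepthEstimate} end by end. The way around it is to accept that the individual averages $a_\pm$ are not known and observe that their common offset $T$ disappears upon subtraction. Once this is noticed, the remaining work is essentially bookkeeping on top of the analytic ingredients (energy identity, Cauchy--Schwarz propagation, Gromov--Schwarz, monotonicity) already assembled in Section~\ref{sec:EnergyDepthEstimates}.
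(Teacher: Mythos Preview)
Your argument is correct and is exactly what the paper intends: the paper's own proof consists only of the remark that it ``proceeds along the same lines as the proof of lemma~\ref{lem:EnergyDepthEstimate} and is left to the reader.'' You have filled this in faithfully, including the one genuinely new observation needed, namely that the absence of an action reference at either end is harmless because the common offset $T$ drops out of the difference $f(u(-\infty))-f(u(+\infty))$.
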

The proof proceeds along the same lines as the proof of lemma
\ref{lem:EnergyDepthEstimate} and is left to the reader.

In particular, this lemma implies that the relative height
$(f(u(-\infty))-f(u(+\infty)))$ is proper on $\M(\emptyset,\emptyset)$.

\subsection{Exploring boundary components of $\M(b,\{f\geq h\})$}

Fix some $b\in\Crit_{1}(f)$.

To each level $h\in\R$, is associated the moduli space
$$
\M(b,\{f\geq h\})=
\{(u,R)\in\M(b,\emptyset),
f(u(+\infty))\geq h\}.
$$

For a generic choice of $h$, it is a smooth $2$ dimensional manifold with
corners, whose boundary is given by the conditions
\begin{itemize}
\item 
  $f(u(+\infty))=h$
\item
  or $R=0$ (which correspond to the case when $u$ is a
  constant map),
\item
  or the configuration is broken at an intermediate orbit (recall there
  is no index 0 Morse critical point).
\end{itemize}

%\begin{remark}
%  We will not discuss this stratification structure of the moduli space
%  in details. In fact, although it is 2 dimensional, we only want to
%  explore the boundary components of this space, and this only requires
%  the usual gluing maps from $0$ dimensional spaces into $1$ dimensional
%  ones. The stratification itself and the big moduli space somehow only
%  give its coherence to the construction from behind the scene.
%\end{remark}

Exploring boundary components of $2$ dimensional moduli spaces by means
of ``crocodile walks'', as explained in \cite{FloerPi1} adapts
straightforwardly to the current situation. Since the involved
degenerations are slightly different, we still recall it briefly below, and refer
to \cite{FloerPi1} for a more detailed discussion.

We first need a description of the boundary. It can be described as
\begin{equation}
  \partial\M(b,\{f\geq h\})=B_{1}\cup B_{2} \cup B_{3}\cup B_{4}
\end{equation}
with
\begin{enumerate}
\item 
  $B_{1}=  \M(b,\{f=h\})$
\item
  $B_{2}=\big(W^{u}(b)\cap\{f\geq h\}\big)$
\item
  $B_{3}=\bigcup_{|y|=1}\M(b,y)\times\M(y,\{f\geq h\})$.
\item
  $B_{4}=\bigcup_{|x|=0}\M(b,x)\times\M(x,\{f\geq h\})$.
\end{enumerate}
Here, the set $B_{2}$ corresponds the condition $R=0$ and consists in the
arc of the unstable manifold of $b$ that lies above $h$ (in which each
point  $p$ is seen as the piece of Morse flow line from $b$ to $p$,
followed by the constant map $\R\times\S^{1}\to\{p\}\subset\Cov{M}$). The spaces
$B_{3}$ and $B_{4}$ correspond to configurations that are broken at an
intermediate orbit $z$, and cover all the possible indices for $z$, since
$\M(b,z) \neq \emptyset$ requires $|z|\leq 1$, and
$\M(z,\emptyset)\neq\emptyset$ requires $|z|\geq 0$.

The configurations in this boundary all undergo a degeneracy~: $\{f=h\}$
for $B_{1}$, $R=0$ for $B_{2}$, a Floer break for $B_{3}$ and $B_{4}$. We
will say that configurations in $B_{1}$ and $B_{3}$ undergo a lower
degeneracy, and those in $B_{2}$ and $B4$ an upper degeneracy.

Contained in this boundary are the ``corners''
$$
C=\partial B_{1}\cup\partial B_{2}\cup\partial B_{3}\cup\partial B_{4}
=\bigcup_{i\neq j}B_{i}\cap B_{j}.
$$
More explicitly, we let
\begin{equation}
  \label{eq:corners}
  C = C_{1}\cup C_{2} \cup C_{3}
\end{equation}
with
\begin{enumerate}
\item 
  $C_{1} = B_{1}\cap B_{2} = W^{u}(b)\cap\{f=h\} = \{p_{-},p_{+}\}$,
\item
  $C_{2} = B_{1}\cap B_{3} = \bigcup_{|y|=1}\M(b,y)\times\M(y,\{f=h\})$,
\item
  $C_{3} = B_{3}\cap B_{4} = \bigcup_{\substack{|y|=1\\|x|=0}}
  \M(b,y)\times\M(y,x)\times\M(x,\emptyset)$,
\end{enumerate}
all the other intersections being empty. Here, $p_{\pm}$ are the two
intersection points of $W^{u}(b)$ with the level $\{f=h\}$.

Observe that the configurations in $C$ are exactly those undergoing 2
degeneracy, which are always a lower one and an upper one.

The last required ingredient are ``gluing'' maps on the boundary of
 $1$-dimensional moduli spaces.

\begin{proposition}
  For a generic choice of $h$, there are maps
  \begin{align}
    \label{eq:gluing_byx}
    \M(b,y)\times\M(y,x)\times[0,\epsilon)&\to \M(b,x) \\
    \label{eq:gluing_yxh}
    \M(b,y)\times\M(y,\{f=h\})\times[0,\epsilon)&\to \M(b,\{f=h\})\\
    \label{eq:gluing_bRhh}
    \{p_{-},p_{+}\}\times[0,\epsilon)&\to \M(b,\{f= h\})\\
    \label{eq:gluing_yh}
    \M(y,\{f=h\})\times[0,\epsilon)&\to \M(y,\{f\geq h\})\\  
    \label{eq:gluing_bRRh}
    \{p_{-},p_{+}\}\times[0,\epsilon)&\to \M(y,\{f\geq h\})
  \end{align} 
  which are local homoeomoprhisms near the boundary points. These maps
  will be called ``gluing'' maps (although the three last ones do not
  glue two broken pieces together).
\end{proposition}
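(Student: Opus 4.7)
The plan is to establish each of the five gluing maps via the standard pre-gluing plus Newton iteration scheme of Floer theory, with minor modifications to accommodate the hybrid PSS-type setting and the level-set condition $\{f=h\}$. First, I would pick $h$ to be a regular value of the proper map $u \mapsto f(u(+\infty))$ on each of the finitely many moduli spaces $\M(b,\emptyset)$, $\M(y,\emptyset)$, $\M(x,\emptyset)$ with action below a fixed bound. This is a generic condition by Sard, and it guarantees that $B_{1}$, $C_{1}$ and $C_{2}$ are cut out as smooth submanifolds of the expected dimension and that the two points $p_{\pm}=W^{u}(b)\cap\{f=h\}$ are transverse intersections.

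For the Floer-breaking gluings \eqref{eq:gluing_byx} and \eqref{eq:gluing_yxh}, I would run the standard Floer gluing construction at the intermediate orbit. Given $((u_{1},R_{1}),u_{2})\in\M(b,y)\times\M(y,x)$, the pre-glued object $u_{1}\#_{T}u_{2}$ obtained by cutting off both curves around a neck of length $T=1/\epsilon$ solves the hybrid equation \eqref{eq:saucisse_equation} up to an error decaying exponentially in $T$. The linearized operator at the pair is surjective since each factor is cut out transversely, and a uniform right inverse is built by the usual breaking-cylinder technique. Newton iteration then produces a unique nearby genuine solution in $\M(b,x)$; the codimension-one constraint $f(u(+\infty))=h$ in \eqref{eq:gluing_yxh} survives the gluing because $h$ is a regular value. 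The same scheme yields \eqref{eq:gluing_yh}, where one constructs a one-parameter deformation of a solution with $f(u(+\infty))=h$ by pushing the endpoint into $\{f>h\}$, a routine application of the implicit function theorem at a regular level.

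The delicate maps are \eqref{eq:gluing_bRhh} and \eqref{eq:gluing_bRRh}, which parametrize neighborhoods of the $R=0$ boundary. A boundary configuration here is a Morse flow line from $b$ to $p_{\pm}\in\{f=h\}$ together with the constant map $\R\times\S^{1}\to\{p_{\pm}\}$, which trivially solves \eqref{eq:saucisse_equation} at $R=0$. To glue, I would turn on the Hamiltonian by letting $R$ range in a small interval $[0,\epsilon)$: the pre-glued object is the same Morse trajectory together with the constant map $u\equiv p_{\pm}$, now satisfying \eqref{eq:saucisse_equation} up to an error supported in the cutoff regions and of size $O(R\,\Vert X\Vert_{\infty})$. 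Linearizing at this approximate solution yields a Fredholm operator that remains invertible as $R\to 0^{+}$, thanks to the transverse intersection of $W^{u}(b)$ with $\{f=h\}$ at $p_{\pm}$, so Newton iteration produces a one-parameter family of genuine hybrid solutions parametrized by $R\in[0,\epsilon)$, realizing the gluing map.

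The main obstacle will be controlling the uniform invertibility of the linearization in the $R\to 0$ limit, since the relevant Sobolev spaces degenerate as the Floer cylinder collapses to a constant map. The cleanest way to handle this is to work on a rescaled cylinder on which $\chi_{R}$ becomes a fixed cutoff, reducing the problem to a regular perturbation of the trivial one. In each of the five cases, the local homeomorphism property — injectivity together with surjectivity onto a neighborhood of the boundary point — then follows by combining the uniqueness clause of the Newton iteration with the Gromov compactness already used in earlier sections, which forces any nearby genuine solution to lie in the image of the pre-gluing construction for a unique value of the gluing parameter.
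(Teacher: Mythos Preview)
Your approach is essentially the paper's, fleshed out with the analytic details the paper omits: the paper's own proof is a brief sketch that appeals to standard Floer gluing for \eqref{eq:gluing_byx} and \eqref{eq:gluing_yxh}, to transversality of the evaluation map with the level $h$ for \eqref{eq:gluing_yh}, and to the regularity of the constant solutions of \eqref{eq:saucisse_equation} (deferring to \cite{FloerPi1}) together with genericity of $h$ for \eqref{eq:gluing_bRhh}. Your pre-gluing plus Newton iteration plan is exactly the machinery those references unpack.

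There is one discrepancy worth flagging. You treat \eqref{eq:gluing_bRRh} as a second $R\to 0$ gluing, parallel to \eqref{eq:gluing_bRhh}. The paper does not: it describes \eqref{eq:gluing_bRRh} as resolving the $\{f=h\}$ condition while \emph{staying on} the unstable manifold of $b$ (i.e.\ keeping $R=0$), so that the target is really the $B_{2}$ piece $W^{u}(b)\cap\{f\geq h\}$ rather than $\M(y,\{f\geq h\})$ as printed. In that reading the map is far simpler than your Newton scheme --- it is just the local parametrization of the arc $W^{u}(b)$ near $p_{\pm}$, and the only input needed is that $h$ is a regular value of $f$ restricted to $W^{u}(b)$. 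Your more elaborate $R\to 0$ analysis is not incorrect, just aimed at the wrong map; the rescaled-cylinder argument you sketch is precisely what is needed for the genuine $R\to 0$ case \eqref{eq:gluing_bRhh}, and is the content the paper outsources to \cite{FloerPi1}.
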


\begin{proof}
  The maps in \eqref{eq:gluing_byx} and \eqref{eq:gluing_yxh} are cutout
  from the usual Floer gluing maps by the relevant incidence conditions~:
  for a generic choice of data, they inherit all their properties from
  the original ones.

  The map in \eqref{eq:gluing_bRhh} resolves the $R=0$ condition keeping
  the $\{f=h\}$ condition~: the existence of such a map is obtained form
  the fact that the constants are regular values of the Floer equation
  \eqref{eq:saucisse_equation} (cf \cite{FloerPi1} for instance) and from
  the genericity assumption on $h$.    

  The map in \eqref{eq:gluing_yh} resolves the $f=h$ condition in
  $\M(y,\{f\geq h\})$~: it is again derived from the transversality
  assumption of the evaluation map and the level $h$.

  Finally, \eqref{eq:gluing_bRRh} resolves the $\{f=h\}$ along the
  unstable manifold of $b$, and is derived from the asumption that $h$ is
  a regular level for $f$.
\end{proof}

The gluing maps induce two involutions maps $C\xrightarrow{\glueup}C$ and
$C\xrightarrow{\gluelo}C$, defined by resolving the upper or lower
degeneracy and keeping the other~: in each case, the corner configuration
is seen as one end of a space $B_{i}$ ($i=1,3$ for $\gluelo$ and $i=2,4$
for $\glueup$), and the map assigns the other end. 
 
For finiteness reasons, alternating composition of $\gluelo$ and $\glueup$
then has to loop, and defines a sequence of components
$(\sigma_{1},\dots,\sigma_{k})$ of the spaces $B_{1},...,B_{4}$ with
alternating parity.

Observe now that an odd term $\sigma_{2i+1}$ in this sequence is either
\begin{enumerate}
\item
  $W^{u}(b)\cap\{f\geq h\}$, which can appear at most once,
\item
  or a path $(\beta_{i},\alpha_{i,t})$ where $\beta_{i}\in\M(b,y_{i})$ is
  fixed and $\alpha_{i,t}$ describse a component of $\M(y_{i},\{f\geq h\})$.
\end{enumerate}

In particular, but for the special step associated to $W^{u}(b)$, the
$\alpha_{i,t}$ form a sequence of consecutive Floer steps, and define an
element $\gamma\in\zipped{\Loops(\phi)}{h}$.

Notice moreover that each element $u\in\M(b,\{f\geq h\})$ comes with a
preferred path $\gamma_{u}$ joining $b$ to $\eval(u)$~: away from the
boundary, it is defined as the concatenation of
\begin{itemize}
\item 
  the piece of Morse flow line from $b$ to $u(-\infty)$, parametrized by the
  value of $f$,  
\item
  and the restriction of $u$ to the real line
  $\R\times\{0\}\subset\R\times\S^{1}$, parametrized by the energy of $u$.
\end{itemize}
Using Moore paths, one easily checks that this definition extends
continuously to the boundary.

In particular, when $u$ describes all the components of the sequence
$(\sigma_{1}, \dots , \sigma_{k})$ one after the other,
\begin{itemize}
\item 
 the points $\eval(u)$ describe a continuous loop in
 $\Cov{M}/\sublevel{\Cov{M}}{h}$,  which is the concatenation of
 evaluation of the Floer loop $\gamma$ and the arc defined by $W^{u}(b)$,
\item
 the paths $\gamma_{u}$ describe a continuous $S^{1}$ family of paths
 that all start at $b$~: they fill a disc, whose boundary is the above
 loop.
\end{itemize}
In particular, this proves that the Morse step associated to $b$ and the
Floer loop $\gamma$ are homotopic.

\subsection{Proof of proposition \ref{prop:psi} and theorem \ref{thm:EvalOntoPi1}}

Applying the above construction to the boundary component of
$\M(b,\{f\geq h\})$ that contains the component associated to
$\zipped{W^{u}(b)}{h}$, we obtain a Floer loop $\psi_{h}(b)$, whose
evaluation in $\Cov{M}/\sublevel{\Cov{M}}{h}$ is homotopic to $W^{u}(b)$. 

Repeating this for each $b$, we get a morphism
$$
\zipped{\Loops(f)}{h}\xxto{\psi_{h}}\zipped\Loops{\phi}{h}
$$
through which the
evaluation to $\pi_{1}(\Cov{M}/\sublevel{\Cov{M}}{h})$ factors.

These maps are compatible with the restrictions $\zip{h'}{h}$, and
passing to the limit, we get a morphism $\psi$ making the following
diagram
$$
\xymatrix{
    \Loops(f) \ar[d]_-{\psi}\ar@{>>}[r]^-{\eval} & \pi_{1}(\Cov{M},\Xi)\\
    \Loops(\phi)\ar[ur]_-{\eval}
}
$$
commutative. This proves the proposition \ref{prop:psi}, and hence
theorem \ref{thm:EvalOntoPi1}.

\end{document}